\documentclass[11pt]{article}
\usepackage{listings}
\usepackage{pdflscape}
\usepackage{amsfonts}
\usepackage{epsfig}
\usepackage{lscape}
\usepackage{amsmath,amssymb,amsthm,mathtools}
\usepackage{graphicx,subcaption}
\usepackage{color}
\usepackage{placeins}
\usepackage{url}
\usepackage{cases,empheq}
\usepackage{longtable}
\usepackage{supertabular}

\allowdisplaybreaks[4]
\usepackage{cite}
\usepackage{algorithm,algpseudocode,setspace,float}
\usepackage{booktabs,multirow,array,threeparttable}
\usepackage{enumitem}
\usepackage{lipsum}
\usepackage{hyperref}
\usepackage{cleveref}
\usepackage{svg}

\oddsidemargin 0pt
\evensidemargin 0pt
\marginparwidth 10pt
\marginparsep 10pt
\topmargin -20pt
\textwidth 6.5in
\textheight 8.5in
\parindent = 20pt

\makeatletter

\makeatother

\newtheorem{theorem}{Theorem}
\newtheorem{assumption}{Assumption}
\newtheorem{lemma}{Lemma}
\newtheorem{remark}{Remark}

\newtheorem{proposition}{Proposition}

\newcommand{\mc}{\mathcal}
\newcommand{\mb}{\mathbb}
\newcommand{\mr}{\mathrm}
\newcommand{\bs}{\boldsymbol}
\newcommand{\col}{\operatorname{col}}
\newcommand{\prox}{\operatorname{prox}}

\newcommand{\blkdiag}{\operatorname{blkdiag}}
\newcommand{\argmin}{\operatornamewithlimits{arg\,min}}

\usepackage{caption}
\captionsetup[table]{labelsep=space}

\begin{document}

\title{\bf dHPR: A Distributed Halpern Peaceman--Rachford Method for Non-smooth Distributed Optimization Problems \footnotemark[1]}
\author{Zhangcheng Feng\footnotemark[2], \quad Defeng Sun\footnotemark[3], \quad Yancheng Yuan\footnotemark[4], \quad Guojun Zhang\footnotemark[5]}
\date{\today}
\maketitle

\renewcommand{\thefootnote}{\fnsymbol{footnote}}
\footnotetext[1]{{\bf Funding:} The work of Defeng Sun was supported by the Research Center for Intelligent Operations Research, RGC {Senior Research Fellow Scheme No. SRFS2223-5S02}, and  {GRF Project No. 15307822}. The work of Yancheng Yuan was supported by the RGC Early Career Scheme (Project No.
25305424) and the Research Center for Intelligent Operations Research.}
\footnotetext[2]{Department of Applied Mathematics, The Hong Kong Polytechnic University, Hung Hom, Hong Kong ({\tt zhangcheng.feng@connect.polyu.hk}).}
\footnotetext[3]{Department of Applied Mathematics, The Hong Kong Polytechnic University, Hung Hom, Hong Kong ({\tt defeng.sun@polyu.edu.hk}).}
\footnotetext[4]{Department of Applied Mathematics, The Hong Kong Polytechnic University, Hung Hom, Hong Kong (\textbf{Corresponding author}. {\tt yancheng.yuan@polyu.edu.hk}).}
\footnotetext[5]{Department of Applied Mathematics, The Hong Kong Polytechnic University, Hung Hom, Hong Kong ({\tt guojun.zhang@connect.polyu.hk}).}
\renewcommand{\thefootnote}{\arabic{footnote}}

\begin{abstract}
     This paper introduces the distributed Halpern Peaceman--Rachford (dHPR) method, an efficient algorithm for solving distributed convex composite optimization problems with non-smooth objectives, which achieves a non-ergodic \( O(1/k) \) iteration complexity regarding Karush--Kuhn--Tucker residual. By leveraging the symmetric Gauss--Seidel decomposition, the dHPR effectively decouples the linear operators in the objective functions and consensus constraints while maintaining parallelizability and avoiding additional large proximal terms, leading to a decentralized implementation with provably fast convergence. The superior performance of dHPR is demonstrated through comprehensive numerical experiments on distributed LASSO, group LASSO, and $L_1$-regularized logistic regression problems.
\end{abstract}

\newcommand{\keywords}[1]{%
  \par\vspace{0.5\baselineskip}%
  \noindent{\textbf{Keywords:} #1}
}

\keywords{Distributed optimization, Halpern Peaceman--Rachford, complexity, acceleration.}

\section{Introduction}\label{sec:introduction}

{T}{his} paper focuses on designing a provably efficient algorithm for solving the following convex composite distributed optimization problems (DOPs) over a multi-agent network of $N$ agents:
\begin{align}\label{eq:dops}
    \min_{x\in\mb R^p}\quad \sum_{i=1}^N ~ f_i(A_ix)+r_i(x),
\end{align}
where, for each $i=1,\ldots, N$, the \(i\)-th agent privately holds a data matrix \(A_i\in\mathbb{R}^{m_i\times p}\) and proper, closed, convex (possibly non-smooth) functions \(f_i:\mathbb{R}^{m_i}\to(-\infty,+\infty]\) and \(r_i:\mathbb{R}^p\to(-\infty,+\infty]\). Formulation \eqref{eq:dops} encompasses a wide class of important distributed optimization problems, such as LASSO \cite{tibshirani1996lasso}, group LASSO \cite{yuan2006glasso}, and regularized logistic regression \cite{lee2006logistic}, that underpin applications in decentralized machine learning \cite{lian2017can}, sensor networks \cite{rabbat2004sensor}, control systems \cite{cao2012mac}, and so on.

Designing efficient decentralized algorithms for solving \eqref{eq:dops} has been a key research goal in distributed optimization, with significant progress over the last decade. Early work primarily treated the fully smooth case---each $f_i$ and $r_i$ are differentiable with Lipschitz‐continuous gradients \cite{nedic2009dgd,shi2014dadmm,ling2015dlm,shi2015extra,yuan2016dgd}. Later, to handle smooth $f_i$ and proximable (possibly non-smooth) regularizers $r_i$, several distributed proximal-splitting methods have been developed~\cite{chang2014ladmm,shi2015pgextra,aybat2017pgadmm,li2019nids,zhang2021pad,condat2022distributed,guo2023dipgm}. Among these, \emph{PG-EXTRA}~\cite{shi2015pgextra}---a special case of the Condat--V\~u scheme~\cite{condat2013cv,vu2013cv}---achieves an ${O}(1/\sqrt{k})$ rate for the averaged Karush--Kuhn--Tucker (KKT) residual in the general convex setting. The network-independent \emph{NIDS} algorithm~\cite{li2019nids}, closely related to \emph{PD3O}~\cite{yan2018pd3o}, admits step sizes independent of the graph’s spectral gap and achieves a non-ergodic $o(1/\sqrt{k})$ KKT residual rate.

For general composition of non-smooth functions $f_i$ with linear operators $A_i$, the proximal mapping $\prox_{\,f_i\circ A_i}$ is typically unavailable; this motivates introducing auxiliary variables $y_i$ to decouple $A_i$ from $f_i$ and local copies $x_i$ to impose consensus. Consequently, we can recast \eqref{eq:dops} as the following equivalent consensus–decoupling reformulation (see, e.g.,~\cite{guo2023disa,zhou2024dpadmm}):
\begin{equation}\label{eq:dop1}
\begin{aligned}
    \min_{\{(x_i,\,y_i)\}_{i=1}^N}\quad &\sum_{i=1}^N~ f_i(y_i)+r_i(x_i), \\ 
    \text{s.t.}\quad &x_1=\cdots=x_N, \\
    &A_ix_i-y_i=\bs0,\quad i=1,\,\ldots,\,N.
\end{aligned}
\end{equation}  
The alternating direction method of multipliers (ADMM) and its variants have become widely used and extensively studied for solving~\eqref{eq:dop1}; see, e.g.,~\cite{guo2023disa,zhou2024dpadmm,latafat2019thripddist,li2021pdfpdist,li2022tpus}.
Note that the non-ergodic rates of ADMM-type methods are typically ${O}(1/\sqrt{k})$ in terms of the KKT residuals \cite{cui2016convergence}, which makes it difficult to obtain high-accuracy solutions efficiently (see also Table~\ref{tab:alg_comparison} for a summary of rates). This motivates us to develop an accelerated ADMM framework tailored to~\eqref{eq:dop1}, and thus to \eqref{eq:dops}---that handles general non-smooth $f_i$ and $r_i$.

\begin{table}[htbp]
\centering
\renewcommand{\arraystretch}{1.3}
\resizebox{\textwidth}{!}{
\begin{threeparttable}
\begin{tabular}{lccc}
\toprule
\textbf{Algorithm} & 
\textbf{Convergence Rate} & \textbf{Metric} \\
\midrule
{PG-EXTRA \cite{shi2015pgextra}}          & Averaged / best $O(1/\sqrt{k})$ & KKT residual         \\
{NIDS \cite{li2019nids}}                  & Non-ergodic $o(1/\sqrt{k})$     & KKT residual         \\
{DISA \cite{guo2023disa}}                 & Averaged / best $O(1/\sqrt{k})$ & KKT residual         \\
{DP-ADMM \cite{zhou2024dpadmm}}           & Ergodic $O(1/k)$                & Feasibility violations \& primal objective error \\ 
\textbf{dHPR (This work)}                 & \textbf{Non-ergodic ${O}(1/k)$} & \textbf{KKT residual \& dual objective error} \\ 
\bottomrule
\end{tabular}
\end{threeparttable}
}
\caption{Complexity results of selected distributed composite optimization algorithms.}
\label{tab:alg_comparison}
\end{table}

Recently, some progress has been made on accelerating preconditioned ADMM (pADMM)~\cite{zhang2022hpr,yang2025accpadmm,sun2025accpadmm,zhang2025hot}---in particular, by integrating Halpern iteration~\cite{halpern1967fixed,Lieder2021halpern} into (semi-proximal) Peaceman--Rachford (PR) schemes \cite{eckstein1992dr,lions1979splitting}. Specifically, Zhang et al. \cite{zhang2022hpr} developed the Halpern Peaceman--Rachford (HPR) method (without proximal terms) by incorporating Halpern iteration into the PR method, achieving an ${O}(1/k)$ iteration complexity in terms of both KKT residual and objective error. Furthermore, Sun et al. \cite{sun2025accpadmm} reformulated the semi-proximal PR method as a degenerate proximal point method \cite{bredies2022dppa} with a positive semidefinite preconditioner, and employed Halpern iteration to derive the HPR method with semi-proximal terms, which also attains an ${O}(1/k)$ iteration complexity.

Although HPR methods have shown promise on large-scale centralized models such as linear programming~\cite{chen2024hprlp} and convex quadratic programming~\cite{chen2025hprqp}, a distributed counterpart with theoretical guarantees remains unexplored. Another challenge is to design proximal terms that simultaneously handle the consensus constraint and the local coupling $A_i x_i = y_i$ in \eqref{eq:dop1} so that each per-agent subproblem admits a solution with low computational cost. A common remedy is to introduce a large proximal term~\cite{esser2010general,chambolle2011first} to linearize the coupled quadratic term in the subproblems, but this often slows down the convergence in practice. To address these challenges, we develop a distributed HPR (\textbf{dHPR}) method for \eqref{eq:dop1}---and hence for \eqref{eq:dops}---with the following features:
\begin{enumerate}
    \item \textbf{A distributed HPR with fast convergence rates.} We present dHPR, to our knowledge, the first distributed realization of the HPR algorithm for the consensus-decoupling model \eqref{eq:dop1}, and establish a non-ergodic ${O}(1/k)$ convergence for both the KKT residual and the objective error---improving on the typical non-ergodic ${O}(1/\sqrt{k})$ guarantees of related methods.
    \item \textbf{sGS-based decoupling and cheap per-agent updates.} Incorporating a symmetric Gauss--Seidel (sGS) decomposition ~\cite{li2016schur,li2019block}, dHPR decouples the linear operators $A_i$ from the consensus constraint, yielding closed-form, parallelizable per-agent updates and avoiding introducing a large proximal term.
    \item \textbf{Empirical superiority.} Extensive experiments show that dHPR achieves superior convergence compared with several state-of-the-art distributed optimization methods.
\end{enumerate}

The remainder of the paper is organized as follows. Section~\ref{sec:prob} formalizes the model and introduces some assumptions. Section~\ref{sec:dhpr} introduces the details of the proposed dHPR algorithm, including its convergence rates and an efficient distributed implementation. Detailed numerical results are shown in Section~\ref{sec:numerical} to demonstrate the superior performance of the dHPR algorithm for solving a wide class of convex composite distributed optimization problems. We conclude the paper in Section~\ref{sec:conclusion}.

\vspace{1em}
\noindent\textbf{Notations.}
For any given positive integer $p$, $\bs0_p$ (or $\bs1_p$) is the all-zero (or all-one) vector in $\mb R^p$, and $I_p \in \mb R^{p\times p}$ is the identity matrix. We omit the sub-script (i.e., $\bs0,\,\bs1,\,I$) to denote a vector or matrix with appropriate dimension. For any proper closed convex function $f:\mb R^p\rightarrow(-\infty,\,+\infty]$, we denote 
its subdifferential as the set-valued operator $\partial f:\mb R^p\rightrightarrows\mb R^p:\, x \mapsto \{\xi\in\mb R^p:\,f(y)-f(x)\geq\langle \xi,\, y-x \rangle,\,\forall y\in\mb R^p\}$, its Fenchel conjugate as $f^*(x):=\sup_{y}\{\langle y,\,x \rangle - f(y)\}$, $x\in\mb R^p$, and its proximal mapping as $\prox_{\tau f}(x):=\argmin_{y\in\mb R^p}\{f(y)+\frac{1}{2\tau}\|x-y\|^2\}$, $x\in\mb R^p$, $\tau>0$. Given $N$ vectors $x_1,\,\ldots,\,x_N$, denote $\col(x_1,\,\ldots,\,x_N):=(x_1^\top,\,\ldots,\,x_N^\top)^\top$. Given $N$ matrices $A_1,\,\ldots,\,A_N$, $\mr{blkdiag}(A_1,\,\ldots,\,A_N)$ is the block diagonal matrix with diagonal elements $A_1,\,\ldots,\,A_N$. $\otimes$ denotes the Kronecker product. Given a matrix $V\in\mb R^{m\times n}$, $\ker(V):=\{x\in\mb R^n:\,Vx=\bs0_m\}$. Given a vector $x\in\mb R^n$, $\mr{span}(x):=\{kx:\,k\in\mb R\}$. Given a symmetric matrix $A$, $A\succ\bs0$ means that $A$ is positive definite. For any symmetric and positive semidefinite matrix $\mc M\in\mb R^{p\times p}$, denote $\|x\|_{\mc M}:=\sqrt{\langle x,\,x \rangle_{\mc M}}=\sqrt{\langle x,\,\mc Mx \rangle}$ for any $x\in\mb R^p$.

\section{Problem Formulation}\label{sec:prob}

In this section, we introduce the communication graph framework and reformulate DOPs to facilitate the implementation of our proposed dHPR algorithms, along with some standard assumptions.

\subsection{The Communication Graph}

In distributed networks, agents communicate with each other through a graph denoted by $\mc G=(\mc N,\,\mc E)$, where $\mc N=\{1,\,\ldots,\,N\}$ is the set of nodes (agents) and $\mc E$ is the set of edges. Suppose $i,\,j\in\mc N$, if $i$ can receive $j$'s information, then $(i,\,j)\in\mc E$. 
A \textit{directed path} from $i_1$ to $i_k$ is a sequence of edges $\{(i_1,\,i_2),\,\ldots,\,(i_{k-1},\,i_k)\}$ with distinct nodes $\{i_r\}_{r=1}^k$. The graph is \textit{strongly connected} if there exists at least one directed path between any two nodes in the graph. Additionally, a weighted adjacency matrix ${W=[a_{ij}]}\in\mb R^{N\times N}$ of $\mc G$ satisfies: ${a_{ij}>0}$ if $(i,\,j)\in\mc E$ or $i=j$, and ${a_{ij}=0}$ otherwise. We adopt the following standard assumption regarding the communication graph, which is widely used in distributed optimization literature \cite{shi2015pgextra,li2019nids,guo2023dipgm,zhang2021pad,guo2023disa,zhou2024dpadmm}.
\begin{assumption}\label{ass:graph}
    The communication graph $\mc G$ is strongly connected and the weighted adjacency matrix $W$ is symmetric and doubly stochastic, i.e., $W\bs 1_N=\bs 1_N$, $\bs1_N^\top W=\bs 1_N^\top$.
\end{assumption}
Note that the matrix $I-W\in\mb R^{N\times N}$ is symmetric and positive semidefinite under Assumption \ref{ass:graph}.

\subsection{Reformulations of DOPs and dual DOPs}

Denote
\begin{equation}
\begin{aligned}
    m & :=\sum_{i=1}^Nm_i, \quad\bs x:=\col(x_1,\,\ldots,\,x_N)\in\mb R^{Np},\quad \bs y:=\col(y_1,\,\ldots,\,y_N)\in\mb R^m, \\ 
    f(\bs y) &:=\sum_{i=1}^Nf_i(y_i),\quad r(\bs x) :=\sum_{i=1}^Nr_i(x_i),\quad \bs A:= \blkdiag(A_1,\,\ldots,\,A_N)\in\mb R^{m\times Np}.
\end{aligned}
\end{equation}
 Define $\bs U:=\sqrt{I-W}\otimes I_p\in\mb R^{Np\times Np}$, which satisfies $\ker(\sqrt{I-W})=\mr{span}(\bs 1_N)$ under Assumption \ref{ass:graph}. Then, we can equivalently reformulate problem \eqref{eq:dop1} as:
\begin{equation}\label{eq:dop2}
    \begin{aligned}
        \min_{(\bs x,\,\bs y)\in\mb R^{Np}\times\mb R^m}\quad & f(\bs y)+r(\bs x), \\ 
        \text{s.t.}\quad & \bs U\bs x=\bs0, \\
        & \bs A\bs x-\bs y=\bs0.
    \end{aligned}
\end{equation}
The dual problem of \eqref{eq:dop2} is
\begin{equation}\label{eq:dualdop}
    \begin{aligned}
        \min_{(\bs z,\,\bs w,\,\bs v)\in\mb R^m\times\mb R^{Np}\times\mb R^{Np}}\quad & f^*(\bs z)+r^*(\bs v) \\
        \text{s.t.}\quad & \bs A^\top\bs z+\bs U^\top\bs w+\bs v=\bs0.
    \end{aligned}
\end{equation}
For any given parameter $\sigma>0$, the augmented Lagrangian function of \eqref{eq:dualdop} is defined, for any $(\bs z,\,\bs w,\,\bs v,\,\bs x)\in\mb R^m\times\mb R^{Np}\times\mb R^{Np}\times\mb R^{Np}$, as follows:
\begin{align}\label{eq:lagrangian}
    L_\sigma(\bs z,\bs w,\bs v;\bs x):=&f^*(\bs z)+r^*(\bs v)-\left\langle \bs x,\,\bs A^\top\bs z+\bs U^\top\bs w+\bs v \right\rangle + \frac{\sigma}{2}\left\| \bs A^\top\bs z+\bs U^\top\bs w+\bs v \right\|^2.
\end{align}
For notational convenience, we denote
\begin{equation}
    \begin{aligned}
        \bs u &:=(\bs z,\,\bs w,\,\bs v,\,\bs x),\quad
        \mc U :=\mb R^m\times\mb R^{Np}\times\mb R^{Np}\times\mb R^{Np}.
    \end{aligned}
\end{equation}
According to \cite[Cor. 28.3.1]{rockafellar1997ca}, a point $(\bs z^*,\,\bs w^*,\,\bs v^*)\in\mb R^m\times\mb R^{Np}\times\mb R^{Np}$ is an optimal solution to problem \eqref{eq:dualdop} if there exists $\bs x^*\in\mb R^{Np}$ such that the KKT system below is satisfied:
\begin{equation}\label{eq:kkt}
    \begin{cases}
        \bs0\in\partial f^*(\bs z^*)-\bs A\bs x^*, \\
        \bs0=-\bs U\bs x^*, \\
        \bs0\in\partial r^*(\bs v^*)-\bs x^*, \\
        \bs0=-\bs A^\top\bs z^*-\bs U^\top\bs w^*-\bs v^*.
    \end{cases}
\end{equation}
Now, we make the following assumption.
\begin{assumption}\label{ass:kkt}
    The KKT system \eqref{eq:kkt} has a nonempty solution set.
\end{assumption}
Under Assumption \ref{ass:kkt}, solving problems \eqref{eq:dop2} and \eqref{eq:dualdop} is equivalent to finding a $\bs u^*\in\mc U$ such that $\bs0\in\mc K\bs u^*$, where the maximally monotone operator $\mc K$ is defined by: $\forall \bs u=(\bs z,\,\bs w,\,\bs v,\,\bs x)\in\mc U$,
\begin{equation}\label{eq:kktmap}
    \mc K\bs u :=
    \begin{pmatrix}
        \partial f^*(\bs z)-\bs A\bs x \\
        -\bs U\bs x \\
        \partial r^*(\bs v)-\bs x \\
        \bs A^\top\bs z+\bs U^\top\bs w+\bs v
    \end{pmatrix}.
\end{equation}

\section{dHPR: A distributed HPR method}\label{sec:dhpr}

This section presents the dHPR method for solving DOPs. We first detail the core algorithmic framework, then provide comprehensive convergence analysis, and conclude with a practical and efficient implementation.

\subsection{dHPR}

Denote $\bar{\bs u}:=(\bar{\bs z},\,\bar{\bs w},\,\bar{\bs v},\,\bar{\bs x})$ and $\hat{\bs u}:=(\hat{\bs z},\,\hat{\bs w},\,\hat{\bs v},\,\hat{\bs x})$. The HPR method with semi-proximal terms, which corresponds to the accelerated pADMM with $\rho=2$ and $\alpha=2$ proposed in \cite{sun2025accpadmm}, is presented in Algorithm~\ref{alg:sphpr} for solving the dual problem \eqref{eq:dualdop}.

\begin{algorithm}
\caption{A semi-proximal HPR method for solving the dual problem \eqref{eq:dualdop}}
\label{alg:sphpr}
\begin{algorithmic}[1] 
\Require Choose a symmetric positive semidefinite matrix $\mc T\in\mb R^{(m+Np)\times(m+Np)}$, $\bs u^0=(\bs z^0,\,\bs w^0,\,\bs v^0,\,\bs x^0)\in\mc U$ and set $\sigma>0$.
\For{$k=0,\,1,\,\cdots$}
    \State $\bar{\bs v}^{k+1}=\argmin\limits_{\bs v\in\mb R^{Np}} \left\{L_\sigma(\bs z^k,\bs w^k,\bs v;\bs x^k)\right\}$
    \State $\bar{\bs x}^{k+1}={\bs x}^k-\sigma(\bs A^\top\bs z^k+\bs U^\top\bs w^k+\bar{\bs v}^{k+1})$
    \State
    $(\bar{\bs z}^{k+1},\bar{\bs w}^{k+1})=\argmin\limits_{(\bs z,\bs w)\in\mb R^{m}\times\mb R^{Np}} \left\{L_\sigma(\bs z,\bs w,\bar{\bs v}^{k+1};\bar{\bs x}^{k+1})+\dfrac{1}{2}\left\| (\bs z,\bs w)-(\bs z^k,\bs w^k) \right\|^2_{\mc T}\right\}$
    \State $\hat{\bs u}^{k+1}=2\bar{\bs u}^{k+1}-\bs u^k$
    \State $\bs u^{k+1}=\dfrac{1}{k+2}\bs u^0+\dfrac{k+1}{k+2}\hat{\bs u}^{k+1}$
\EndFor
\State \Return $\bar{\bs u}^{k+1}$
\end{algorithmic}
\end{algorithm}
Note that the main computational bottleneck in Algorithm~\ref{alg:sphpr} lies in solving the subproblem involving the variables $(\bs z,\,\bs w)$ (Line 4). 
A key step is choosing a suitable proximal operator $\mathcal{T}$ to simplify this subproblem. 
To address this difficulty and enable decentralized updates, we employ the sGS technique \cite{li2016schur,li2019block} to decouple $\bs z$ and $\bs w$. 
Specifically, define the symmetric positive semidefinite matrix $\mc S$ and the sGS operator $\hat{\mc S}$ as
\begin{align}\label{eq:S&Shat}
    \mc S=\sigma\begin{bmatrix} \mc S_z & \bs0 \\ \bs0 & \mc S_w \end{bmatrix}, \,
    \hat{\mc S}=\begin{bmatrix} {\sigma}\bs A\bs U(\mc S_w+\bs U^2)^{-1}\bs U\bs A^\top & \bs0 \\ \bs0 & \bs0 \end{bmatrix},
\end{align}
where $\mc S_z\in\mb R^{m\times m}$ and $\mc S_w\in\mb R^{Np\times Np}$ are symmetric positive semidefinite matrices such that $\mc S_z+\bs A\bs A^\top\succ\bs0$ and $\mc S_w+\bs U^2\succ\bs0$, respectively. The following result shows that incorporating $\hat{\mc S}$ enables efficient decoupled updates of $(\bar{\bs z}^{k+1},\,\bar{\bs w}^{k+1})$ in Algorithm~\ref{alg:sphpr} for all $k\geq0$.
\begin{proposition}[{\cite[Thm. 1]{li2019block}}]\label{pro:sgs}
    Let $\mc T=\mc S+\hat{\mc S}$ with $\mc S$ and $\hat{\mc S}$ given in \eqref{eq:S&Shat}. Then for any $k\geq0$, the update of $(\bar{\bs z}^{k+1},\, \bar{\bs w}^{k+1})$ in Algorithm~\ref{alg:sphpr}, i.e.,
    \begin{align*}
        (\bar{\bs z}^{k+1},\bar{\bs w}^{k+1})=\argmin\limits_{(\bs z,\bs w)\in\mb R^{m}\times\mb R^{Np}} \left\{L_\sigma(\bs z,\bs w,\bar{\bs v}^{k+1};\bar{\bs x}^{k+1})+\dfrac{1}{2}\left\| (\bs z,\bs w)-(\bs z^k,\bs w^k) \right\|^2_{\mc T}\right\}
    \end{align*}
    is equivalent to the following updates:
    \begin{align*}
    \begin{cases}
        \notag\bar{\bs w}^{k+\frac{1}{2}}=\argmin\limits_{\bs w\in\mb R^{Np}} \left\{L_\sigma(\bs z^k,\bs w,\bar{\bs v}^{k+1};\bar{\bs x}^{k+1})+\dfrac{\sigma}{2}\left\| \bs w-\bs w^k \right\|^2_{\mc S_w}\right\}, \\
        \notag\bar{\bs z}^{k+1}=\argmin\limits_{\bs z\in\mb R^m} \left\{L_\sigma(\bs z,\bar{\bs w}^{k+\frac12},\bar{\bs v}^{k+1};\bar{\bs x}^{k+1})+\dfrac{\sigma}{2}\left\| \bs z-\bs z^k \right\|^2_{\mc S_z}\right\}, \\
        \notag\bar{\bs w}^{k+1}=\argmin\limits_{\bs w\in\mb R^{Np}} \left\{L_\sigma(\bar{\bs z}^{k+1},\bs w,\bar{\bs v}^{k+1};\bar{\bs x}^{k+1})+\dfrac{\sigma}{2}\left\| \bs w-\bs w^k \right\|^2_{\mc S_w}\right\}.
    \end{cases}
    \end{align*}
    Moreover, $\mc T+\sigma\bs A_U^\top\bs A_U\succ\bs0$, where $\bs A_U:=\begin{bmatrix} \bs A^\top & \bs U^\top \end{bmatrix}\in\mb R^{Np\times(m+Np)}$.
\end{proposition}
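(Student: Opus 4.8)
The plan is to recognize the $(\bs z,\bs w)$-subproblem in Line~4 of Algorithm~\ref{alg:sphpr} as an instance of the convex composite quadratic template covered by the block symmetric Gauss--Seidel (sGS) decomposition theorem of~\cite{li2019block}, and then to read off both conclusions; since the statement is quoted from~\cite[Thm.~1]{li2019block}, the real work is verifying that the hypotheses match and that $\hat{\mc S}$ is exactly the sGS correction induced by $\mc S$.

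First I would fix an iteration index $k\geq0$ and, writing $\bar{\bs v}=\bar{\bs v}^{k+1}$ and $\bar{\bs x}=\bar{\bs x}^{k+1}$, expand $L_\sigma(\bs z,\bs w,\bar{\bs v};\bar{\bs x})$ as a function of $(\bs z,\bs w)$. Using that $\bs U=\sqrt{I-W}\otimes I_p$ is symmetric, its quadratic part has coefficient matrix $\sigma\bs A_U^\top\bs A_U=\sigma\left[\begin{smallmatrix}\bs A\bs A^\top & \bs A\bs U \\ \bs U\bs A^\top & \bs U^2\end{smallmatrix}\right]$, while the nonsmooth part is $f^*(\bs z)$ (the term $r^*(\bar{\bs v})$ being constant in $(\bs z,\bs w)$), which is block separable because it does not involve $\bs w$. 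Adding the proximal term $\mc S=\sigma\,\blkdiag(\mc S_z,\mc S_w)$, the diagonal blocks become $\mc D:=\sigma\,\blkdiag(\bs A\bs A^\top+\mc S_z,\ \bs U^2+\mc S_w)$, which is positive definite since $\sigma>0$ and $\bs A\bs A^\top+\mc S_z\succ\bs0$, $\bs U^2+\mc S_w\succ\bs0$ by hypothesis. With $\mc V:=\left[\begin{smallmatrix}\bs0 & \sigma\bs A\bs U\\ \bs0 & \bs0\end{smallmatrix}\right]$ the strictly block upper triangular part, one has $\sigma\bs A_U^\top\bs A_U+\mc S=\mc D+\mc V+\mc V^\top$, and a one-line block multiplication gives $\mc V\mc D^{-1}\mc V^\top=\hat{\mc S}$ with $\hat{\mc S}$ as in~\eqref{eq:S&Shat}. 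This identity is the only computation that needs care; everything else is matching notation.

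With this set up, the equivalence follows directly from~\cite[Thm.~1]{li2019block}: the joint subproblem minimizes $L_\sigma(\cdot,\cdot,\bar{\bs v};\bar{\bs x})+\tfrac12\|(\cdot)-(\bs z^k,\bs w^k)\|_{\mc T}^2$ with $\mc T=\mc S+\hat{\mc S}$, which is precisely the convex composite quadratic with quadratic part $\mc D+\mc V+\mc V^\top$ and block-separable nonsmooth part $f^*$, augmented by the sGS correction $\mc V\mc D^{-1}\mc V^\top$; the cited theorem equates its minimizer with the backward--forward sweep for block order $(\bs z,\bs w)$---minimize over $\bs w$ with $\bs z=\bs z^k$ fixed, then over $\bs z$, then over $\bs w$ again---where in each single-block step the proximal term attached to the other block is constant and may be dropped, which reproduces the three displayed updates. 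For the positive-definiteness claim I would substitute $\mc T=\mc S+\hat{\mc S}$ and combine the identity above with $(\mc D+\mc V)\mc D^{-1}(\mc D+\mc V^\top)=\mc D+\mc V+\mc V^\top+\mc V\mc D^{-1}\mc V^\top$ to get $\mc T+\sigma\bs A_U^\top\bs A_U=(\mc D+\mc V)\mc D^{-1}(\mc D+\mc V^\top)$; since $\mc D+\mc V^\top$ is block triangular with positive definite diagonal blocks it is invertible, so the right-hand side is a congruence of $\mc D^{-1}\succ\bs0$ and hence positive definite.

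The main obstacle, such as it is, is purely bookkeeping: correctly orienting the block decomposition---deciding which of $\bs z,\bs w$ is the ``first'' block, and hence the direction of the triangular part $\mc V$ and the placement of the nonzero block of $\hat{\mc S}$---so that the sGS sweep produced by~\cite[Thm.~1]{li2019block} matches the order $\bar{\bs w}^{k+1/2}\to\bar{\bs z}^{k+1}\to\bar{\bs w}^{k+1}$ stated in the proposition, and double-checking that the hypotheses of that theorem (positive definiteness of the diagonal blocks after adding $\mc S$, and block separability of the nonsmooth part) hold, both of which are immediate from the standing assumptions on $\mc S_z,\mc S_w$ and from $\sigma>0$.
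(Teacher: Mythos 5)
Your proposal is correct and follows the same route as the paper, which proves this proposition simply by invoking \cite[Thm.~1]{li2019block}: you verify that the $(\bs z,\bs w)$-subproblem fits the sGS template with block diagonal $\mc D=\sigma\,\blkdiag(\bs A\bs A^\top+\mc S_z,\ \bs U^2+\mc S_w)$ and off-diagonal $\mc V$, that $\mc V\mc D^{-1}\mc V^\top$ equals the $\hat{\mc S}$ of \eqref{eq:S&Shat}, and that $\mc T+\sigma\bs A_U^\top\bs A_U=(\mc D+\mc V)\mc D^{-1}(\mc D+\mc V^\top)\succ\bs0$, all of which check out. The only difference is that you make explicit the bookkeeping (block ordering, the sGS correction identity, and the triangular factorization) that the paper leaves entirely to the citation.
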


To get the decentralized updates, we choose
\begin{align}\label{eq:Sz&Sw}
    \mc S_z=\blkdiag(\mc S_z^1,\,\dots,\,\mc S_z^N) \text{ and }\mc S_w=\lambda_UI-\bs U^2
\end{align}
with $\mc S_z^i\in\mb R^{m_i\times m_i}$ being symmetric positive semidefinite for all $i\in\mc N$ and $\lambda_U\geq1-\lambda_{\min}(W)$. Furthermore, denote
\begin{equation}
    \begin{aligned}
        \bs z &=\col(z_1,\,\ldots,\,z_N)\in\mb R^m,\,z_i\in\mb R^{m_i},\,i\in\mc N, \\
        \bs v &=\col(v_1,\,\ldots,\,v_N)\in\mb R^{Np},\,v_i\in\mb R^p,\,i\in\mc N, \\
        \bs s &:= \bs U^\top\bs w=\col(s_1,\,\dots,\,s_N)\in\mb R^{Np},\,s_i\in\mb R^p,\,i\in\mc N, \\
        \bs u_s &:= (\bs z,\,\bs s,\,\bs v,\,\bs x),\,u_{s,i}:=(z_i,\,s_i,\,v_i,\,x_i),\,i\in\mc N,
    \end{aligned}
\end{equation}
and $\bar u_{s,i}$ and $\hat u_{s,i}$ are defined similarly for all $i\in\mc N$. Given \eqref{eq:S&Shat} and \eqref{eq:Sz&Sw}, by letting $\mc T=\mc S+\hat{\mc S}$ in Algorithm~\ref{alg:sphpr}, we obtain the dHPR method in Algorithm~\ref{alg:hprdop}, which proceeds iteratively with all agents updating in a decentralized and parallel manner.

\begin{algorithm}
\caption{dHPR: A distributed HPR method for solving the dual DOP \eqref{eq:dualdop}}
\label{alg:hprdop}
\begin{algorithmic}[1] 
\Require
Given $W=[a_{ij}]\in\mathbb{R}^{N\times N}$ and $\sigma>0$, each agent $i\in\mathcal{N}$ selects $\lambda_U \geq 1-\lambda_{\min}(W)$, a symmetric positive semidefinite matrix $\mathcal{S}_z^i \in \mathbb{R}^{m_i\times m_i}$, and initializes $(\bs z^0,\,\bs w^0,\,\bs v^0,\,\bs x^0)\in\mc U$ and $\bs s^0=\bs U^\top\bs w^0$.
\For{$k=0,\,1,\,\cdots$} (all agents $i\in\mc N$ perform in parallel)
    \State
    $\bar{v}_i^{k+1}=\argmin\limits_{v\in\mb R^p}\left\{r^*_i(v)-\langle x_i^k,v \rangle +\dfrac{\sigma}2\left\| A_i^\top z_i^k+s_i^k+v \right\|^2\right\}$
    \vspace{1pt}
    \State $\bar{x}_i^{k+1}=x_i^k-\sigma\left( A_i^\top z_i^k+s_i^k+\bar{v}_i^{k+1} \right)$
    \vspace{1pt}
    \State $\bar{s}_i^{k+\frac{1}{2}}=s_i^k+\dfrac{1}{\sigma\lambda_U}\Big[ 2\bar{x}_i^{k+1}-x_i^k-\sum\limits_{j=1}^Na_{ij}(2\bar{x}_j^{k+1}-x_j^k) \Big]$
    \vspace{1pt}
    \State
    $\bar{z}_i^{k+1}=\argmin\limits_{z\in\mb R^{m_i}}\left\{f^*_i(z)-\langle \bar{x}_i^{k+1},A_i^\top z\rangle+\dfrac{\sigma}2\left\| A_i^\top z+\bar{s}_i^{k+\frac12}+\bar{v}_i^{k+1} \right\|^2+\dfrac{\sigma}{2}\left\| z-z_i^k \right\|^2_{\mc S_z^i}\right\}$
    \vspace{1pt}
    \State $\bar{s}_i^{k+1}=\bar{s}_i^{k+\frac12}+\dfrac{1}{\lambda_U}\left[ A_i^\top(z_i^k-\bar{z}_i^{k+1})-\sum\limits_{j=1}^Na_{ij}A_j^\top(z_j^k-\bar{z}_j^{k+1}) \right]$
    \vspace{1pt}
    \State $\hat{u}^{k+1}_{s,i} = 2\bar{u}^{k+1}_{s,i}-u^{k}_{s,i}$
    \vspace{2pt}
    \State $u^{k+1}_{s,i} = \dfrac{1}{k+2}u^{0}_{s,i} + \dfrac{k+1}{k+2}\hat{u}^{k+1}_{s,i}$
\EndFor
\State \Return $\{\bar{u}^{k+1}_{s,i}\}_{i=1}^N$
\end{algorithmic}
\end{algorithm}

\begin{remark}[sGS decomposition]\label{rmk:sgs}
    Due to the composite structure of $\bs A$ and $\bs U$, a natural approach applying Algorithm~\ref{alg:sphpr} to solve the problem \eqref{eq:dualdop} typically requires a large proximal operator of the form $\mc T=\sigma(\lambda_{AU}I-\bs A_U^\top\bs A_U)$ with $\lambda_{AU}\geq\lambda_{\max}(\bs A_U)$ to simplify the subproblem with respect to $(\bs z,\, \bs w)$~\cite{esser2010general,chambolle2011first}. Note that the resulting large spectral norm $\|\mc T\|$ can significantly slow convergence. In contrast, Algorithm~\ref{alg:hprdop} incorporates the sGS decomposition to decouple $\bs A$ and $\bs U$, yielding a computationally efficient proximal operator $\mc T=\mc S+\hat{\mc S}$ that leads to a decentralized implementation with fast convergence (see Appendix~\ref{app:dual-lhpr} for more details).
\end{remark}

\begin{remark}[Connection with existing algorithms]\label{rmk:connection}
    {When $f_i\equiv0$ for all $i\in\mc N$, the update of $\bar{v}_i^{k+1}$ in Algorithm~\ref{alg:hprdop} is given by
    \begin{equation*}
    \begin{aligned}
        \bar{v}_i^{k+1}
        &=\frac{x_i^k}{\sigma} -s_i^k-\frac{\prox_{\sigma r_i}(x_i^k-\sigma s_i^k)}{\sigma}.
    \end{aligned}
    \end{equation*}
   Substituting the above equality into the $\bar{x}_i^{k+1}$-update, we obtain the following updates for $\bar{x}_i^{k+1}$ and $\bar{s}_i^{k+1}$ in Algorithm~\ref{alg:hprdop} (with $\lambda_U=2$):
    \begin{align*}
    \begin{dcases}
        \bar{x}_i^{k+1}=\prox_{\sigma r_i}(x_i^k-\sigma s_i^k), \\
        \bar{s}_i^{k+1}=s_i^k+\dfrac{1}{2\sigma}\left[ 2\bar{x}_i^{k+1}-x_i^k-\sum\limits_{j=1}^Na_{ij}(2\bar{x}_j^{k+1}-x_j^k) \right].
    \end{dcases}
    \end{align*}
    These two steps correspond to P-EXTRA \cite[Alg. 2]{shi2015pgextra} and NIDS \cite{li2019nids} when $f_i\equiv0$ for all $i\in\mc N$. Consequently, in this setting, Algorithm~\ref{alg:hprdop} can be seen as an accelerated variant of P-EXTRA and NIDS with relaxation step (Line 7) using Halpern iteration (Line 8).}
\end{remark}

\subsection{Convergence Analysis}

This subsection presents the theoretical results of dHPR. We first state the convergence of dHPR in Theorem~\ref{thm:convergence}; its proof appears in Appendix~\ref{app:thm_convergence}.

\begin{theorem}\label{thm:convergence}
    Suppose that Assumptions \ref{ass:graph}-\ref{ass:kkt} hold. If $\mc S_z^i+A_iA_i^\top\succ\bs0$ for all $i\in\mc N$, then the sequence $\{\bar{\bs u}_s^{k}\}=\{(\bar{\bs z}^k,\,\bar{\bs s}^k,\,\bar{\bs v}^k,\,\bar{\bs x}^k)\}$ generated by Algorithm~\ref{alg:hprdop} converges to the point $(\bs z^*,\,\bs U^\top\bs w^*,\,\bs v^*,\,\bs x^*)$, where $(\bs z^*,\,\bs w^*,\,\bs v^*)$ is a solution to the problem \eqref{eq:dualdop} and $\bs x^*$ is a solution to the problem \eqref{eq:dop2}.
\end{theorem}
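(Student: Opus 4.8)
The plan is to deduce Theorem~\ref{thm:convergence} from the known convergence theory of the semi-proximal HPR method (Algorithm~\ref{alg:sphpr}) together with the sGS equivalence in Proposition~\ref{pro:sgs}. The first step is to verify that Algorithm~\ref{alg:hprdop} is literally an instance of Algorithm~\ref{alg:sphpr} applied to the dual problem \eqref{eq:dualdop}, with the specific choice $\mc T=\mc S+\hat{\mc S}$ built from \eqref{eq:S&Shat} and \eqref{eq:Sz&Sw}. For this I would unwind the blockwise updates: the $\bar{\bs v}$-subproblem in Line~2 of Algorithm~\ref{alg:sphpr} separates across agents into the $\bar v_i^{k+1}$-update; the $\bar{\bs x}$-update is immediate; and, crucially, Proposition~\ref{pro:sgs} says the joint $(\bs z,\bs w)$-subproblem with $\mc T=\mc S+\hat{\mc S}$ is equivalent to the three sequential sGS sweeps (a $\bs w$-sweep, a $\bs z$-sweep, a second $\bs w$-sweep). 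I would then change variables from $\bs w$ to $\bs s=\bs U^\top\bs w$ and substitute $\mc S_w=\lambda_U I-\bs U^2$, showing the first and third $\bs w$-sweeps collapse to the explicit formulas for $\bar s_i^{k+1/2}$ and $\bar s_i^{k+1}$ (Lines~4 and~6), using $\bs U^2=(I-W)\otimes I_p$ so that $(\bs U^2\bs x)_i = x_i - \sum_j a_{ij}x_j$; likewise $\mc S_z$ block-diagonal makes the $\bs z$-sweep separate into the $\bar z_i^{k+1}$-update. The Halpern steps (Lines~7--8) match Lines~5--6 of Algorithm~\ref{alg:sphpr} componentwise. This establishes that $\{\bar{\bs u}^k_s\}$ is exactly the image under $\bs w\mapsto\bs s=\bs U^\top\bs w$ of the iterates $\{\bar{\bs u}^k\}$ produced by Algorithm~\ref{alg:sphpr}.

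The second step is to check the hypotheses needed to invoke the convergence guarantee of the semi-proximal HPR method from \cite{sun2025accpadmm} (equivalently \cite{zhang2022hpr} in the proximal-free case). The required conditions are: Assumption~\ref{ass:kkt} (nonempty KKT solution set), which we have; $\mc T\succeq\bs0$, which holds since $\mc S\succeq\bs0$ and $\hat{\mc S}\succeq\bs0$ by construction (the latter because $(\mc S_w+\bs U^2)^{-1}=\lambda_U^{-1}I\succ\bs0$ under Assumption~\ref{ass:graph} and $\lambda_U\ge 1-\lambda_{\min}(W)$, making $\hat{\mc S}$ a nonnegative congruence); and the nondegeneracy/positive-definiteness condition $\mc T+\sigma\bs A_U^\top\bs A_U\succ\bs0$, which is exactly the last sentence of Proposition~\ref{pro:sgs}. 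The condition $\mc S_z^i+A_iA_i^\top\succ\bs0$ for all $i$ together with $\mc S_w+\bs U^2=\lambda_U I\succ\bs0$ gives $\mc S_z+\bs A\bs A^\top\succ\bs0$ and $\mc S_w+\bs U^2\succ\bs0$, so Proposition~\ref{pro:sgs} applies and in particular the subproblems are well-posed (unique minimizers). From \cite{sun2025accpadmm}, the accelerated pADMM with $\rho=\alpha=2$ generates $\{\bar{\bs u}^k\}$ that converges to some $\bar{\bs u}^\infty=(\bs z^*,\bs w^*,\bs v^*,\bs x^*)$ that is a KKT point, i.e.\ $\bs0\in\mc K\bar{\bs u}^\infty$; by the equivalence recorded after \eqref{eq:kktmap}, $(\bs z^*,\bs w^*,\bs v^*)$ solves \eqref{eq:dualdop} and, reading off the KKT system \eqref{eq:kkt} (the $\bs x$-block gives $\bs U\bs x^*=\bs 0$, i.e.\ consensus, and the remaining blocks give primal feasibility and optimality), $\bs x^*$ solves \eqref{eq:dop2}. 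Finally, since $\bar{\bs s}^k=\bs U^\top\bar{\bs w}^k\to\bs U^\top\bs w^*$ by continuity of the linear map, the image sequence $\{\bar{\bs u}_s^k\}$ converges to $(\bs z^*,\bs U^\top\bs w^*,\bs v^*,\bs x^*)$, as claimed.

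I expect the main obstacle to be the bookkeeping in the first step: carefully carrying out the $\bs w\leftrightarrow\bs s$ change of variables and verifying that the sGS sweeps in Proposition~\ref{pro:sgs}, when written in the $\bs s$-coordinates with $\mc S_w=\lambda_U I-\bs U^2$, reduce \emph{exactly} to Lines~4 and~6 of Algorithm~\ref{alg:hprdop} — in particular confirming that the $\bs w$-subproblems' optimality conditions only ever involve $\bs w$ through $\bs U^\top\bs w$, so that the reformulation in terms of $\bs s$ is consistent and no information about $\bs w$ outside $\mathrm{range}(\bs U^\top)$ is needed. One subtlety worth flagging: the convergence claim from \cite{sun2025accpadmm} yields convergence of $\bar{\bs u}^k$ (the ADMM-type iterate), not necessarily of $\bs u^k$ itself; Theorem~\ref{thm:convergence} only asserts convergence of $\{\bar{\bs u}_s^k\}$, so this matches, but I would state precisely which iterate the cited theorem controls. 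A second minor point is that $\bs w^*$ (hence $\bs U^\top\bs w^*$) need not be unique, but the abstract theorem guarantees convergence to a \emph{particular} KKT point, so the limit is well-defined for the given initialization; I would phrase the conclusion accordingly rather than claiming uniqueness of the limit.
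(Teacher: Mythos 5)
Your proposal is correct and follows essentially the same route as the paper: the paper's Appendix~\ref{app:thm_convergence} first proves a lemma showing that Algorithm~\ref{alg:hprdop} is the image under $\bs w\mapsto\bs s=\bs U^\top\bs w$ of the sGS-decomposed iteration \eqref{eq:sgshpr} (using $\mc S_w=\lambda_U I-\bs U^2$ and $\bs U^2=I-W\otimes I_p$ exactly as you describe), then invokes Proposition~\ref{pro:sgs} to identify this with Algorithm~\ref{alg:sphpr} for $\mc T=\mc S+\hat{\mc S}$, and concludes by citing the convergence result for the accelerated pADMM with $\rho=\alpha=2$ from \cite{sun2025accpadmm}. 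Your additional checks (positive semidefiniteness of $\hat{\mc S}$, which iterate the cited theorem controls, non-uniqueness of $\bs w^*$) are sound refinements of the same argument.
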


To further analyze the complexity of the Algorithm~\ref{alg:hprdop} in terms of the KKT residual and the objective error, we consider the following residual mapping associated with the KKT system \eqref{eq:kkt}, as introduced in \cite{han2018linearadmm}: $\forall \ \bs u_s=(\bs z,\,\bs s,\,\bs v,\,\bs x)\in\mc U$,
\begin{equation}\label{eq:kktresidual}
    \mc R(\bs u_s):=
    \begin{pmatrix}
        \prox_f(\bs z+\bs A\bs x)-\bs A\bs x \\
        \bs U\bs x \\
        \prox_r(\bs v+\bs x)-\bs x \\
        \bs A^\top\bs z+\bs s+\bs v
    \end{pmatrix}.
\end{equation}
Let $\{(\bar{\bs z}^k,\,\bar{\bs v}^k)\}$ be the sequence generated by Algorithm~\ref{alg:hprdop}, to estimate the objective error, $\forall k\geq1$, we define
\begin{equation}\label{eq:dualerror}
    h(\bar{\bs z}^k,\bar{\bs v}^k):= f^*(\bar{\bs z}^k)+r^*(\bar{\bs v}^k)-\big(f^*({\bs z}^*)+r^*({\bs v}^*)\big),
\end{equation}
where $({\bs z}^*,\,\bs w^*,\,{\bs v}^*)$ is a solution to the problem \eqref{eq:dualdop}. In addition, we define a self-adjoint positive semidefinite linear operator $\mc M:\,\mc U\rightarrow\mc U$ as
\begin{equation}\label{eq:M}
    \mc M:=
    \begin{bmatrix}
        \sigma\bs A_U^\top\bs A_U+\mc S+\hat{\mc S} & \bs0 & -\bs A_U^\top \\
        \bs0 & \bs0 &\bs0 \\
        -\bs A_U & \bs0 & \dfrac{1}{\sigma}I_{Np}
    \end{bmatrix}
\end{equation}
with $\mc S$ and $\hat{\mc S}$ given in \eqref{eq:S&Shat} and $\bs A_U$ defined in Proposition \ref{pro:sgs}. We now present the main complexity result regarding the KKT residual and the dual objective error of the dHPR in Theorem \ref{thm:complexity}; the proof is given in Appendix~\ref{app:thm_convergence}.

\begin{theorem}\label{thm:complexity}
    Suppose that Assumptions \ref{ass:graph}-\ref{ass:kkt} hold. Let $\{\bar{\bs u}_s^{k}\}=\{(\bar{\bs z}^k,\,\bar{\bs s}^k,\,\bar{\bs v}^k,\,\bar{\bs x}^k)\}$ and $\{{\bs u}_s^{k}\}=\{({\bs z}^k,\,{\bs s}^k,\,{\bs v}^k,\,{\bs x}^k)\}$ be the sequences generated by Algorithm~\ref{alg:hprdop}, and let ${\bs u}^*=({\bs z}^*,\,{\bs w}^*,\,{\bs v}^*,\,{\bs x}^*)$ be a solution to the KKT system \eqref{eq:kkt}. Define $R_0:=\|\bs u_s^{0}-\bs u^*\|_{\mc M}$ with $\mc M$ given in \eqref{eq:M}. If $\mc S_z^i+A_iA_i^\top\succ\bs0$ for all $i\in\mc N$, then the following complexity bounds hold for all $k\geq0$:
    \begin{gather*}
        \left\| \mc R(\bar{\bs u}_s^{k+1}) \right\| \leq \left( \frac{\sigma\|\bs A_{U}\|+1}{\sqrt{\sigma}}+\left\|\sqrt{\mc S+\hat{\mc S}}\right\|\right)\frac{R_0}{k+1},\\
        \left( -\frac{\|\bs x^*\|}{\sqrt{\sigma}} \right)\frac{R_0}{k+1} \leq h(\bar{\bs z}^{k+1},\bar{\bs v}^{k+1}) \leq \left( 3R_0+\frac{\|\bs x^*\|}{\sqrt{\sigma}} \right)\frac{R_0}{k+1},
    \end{gather*}
    where $\mc R(\cdot)$ and $h(\cdot,\cdot)$ are defined in \eqref{eq:kktresidual} and \eqref{eq:dualerror}, respectively.
\end{theorem}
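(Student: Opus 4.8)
\textbf{Proof proposal for Theorem~\ref{thm:complexity}.}

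The plan is to leverage the equivalence, established in \cite{sun2025accpadmm}, between Algorithm~\ref{alg:hprdop} (i.e., Algorithm~\ref{alg:sphpr} with $\mc T=\mc S+\hat{\mc S}$) and a Halpern-accelerated degenerate proximal point iteration for the monotone inclusion $\bs0\in\mc K\bs u$, so that the generic $O(1/k)$ rate for the fixed-point residual of Halpern iteration transfers to our setting. First I would identify the preconditioner: writing one cycle of dHPR as $\bar{\bs u}^{k+1}=\mc P^{-1}(\cdot)$ for the appropriate degenerate positive-semidefinite operator (this is exactly where $\mc M$ in \eqref{eq:M} comes from, after eliminating the intermediate sGS half-steps via Proposition~\ref{pro:sgs} and using $\mc T+\sigma\bs A_U^\top\bs A_U\succ\bs0$), I would invoke the Halpern-iteration complexity bound from \cite{sun2025accpadmm,zhang2022hpr}: the $\mc M$-weighted residual satisfies $\|\bs u^k-\bar{\bs u}^{k+1}\|_{\mc M}\le R_0/(k+1)$ with $R_0=\|\bs u_s^0-\bs u^*\|_{\mc M}$. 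This is the one external input that does all the heavy lifting; everything else is translating this weighted residual into the two quantities we actually care about.

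Next I would convert the $\mc M$-residual into the unweighted KKT residual $\|\mc R(\bar{\bs u}_s^{k+1})\|$. The key observation is that each component of $\mc R$ in \eqref{eq:kktresidual} can be written, using the optimality conditions for the subproblems in Lines~2--6 of Algorithm~\ref{alg:hprdop} (which encode $\prox_{f^*}$, $\prox_{r^*}$ and the $\bs x$-update), as a linear combination of $\bar{\bs u}^{k+1}-\bs u^k$ and the sGS correction terms. Concretely: the $\bs x$-block of $\mc R$ is $\bs A^\top\bar{\bs z}+\bar{\bs s}+\bar{\bs v}$, which by Line~4 equals $(\bs x^k-\bar{\bs x}^{k+1})/\sigma$ plus the change in $\bs A^\top\bs z+\bs s$ across the half-steps; the consensus block $\bs U\bar{\bs x}$ and the two prox-residual blocks likewise reduce to expressions in $\bar{\bs u}^{k+1}-\bs u^k$ after invoking the Moreau identity $\prox_f+\prox_{f^*}=\mathrm{id}$. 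Collecting terms, one gets $\|\mc R(\bar{\bs u}_s^{k+1})\|\le C\,\|\bar{\bs u}^{k+1}-\bs u^k\|$ for an explicit constant, and then bounding $\|\cdot\|$ by $\|\cdot\|_{\mc M}$ is just a matter of comparing $\mc M$ with a multiple of the identity on the relevant subspace; this is where the factor $(\sigma\|\bs A_U\|+1)/\sqrt{\sigma}+\|\sqrt{\mc S+\hat{\mc S}}\|$ appears (the $\bs A_U$ term from the off-diagonal $-\bs A_U^\top$ coupling and the $1/\sigma I$ block, the $\sqrt{\mc S+\hat{\mc S}}$ term from the proximal block).

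For the dual objective error $h(\bar{\bs z}^{k+1},\bar{\bs v}^{k+1})$, I would use convexity of $f^*$ and $r^*$ together with the subgradient inclusions read off from Lines~2 and~5: there exist $\xi\in\partial f^*(\bar{\bs z}^{k+1})$ and $\eta\in\partial r^*(\bar{\bs v}^{k+1})$ expressible through $\bar{\bs x}^{k+1}$ and the residual-type quantities above. Then $h(\bar{\bs z}^{k+1},\bar{\bs v}^{k+1})\le \langle \xi,\bar{\bs z}^{k+1}-\bs z^*\rangle+\langle\eta,\bar{\bs v}^{k+1}-\bs v^*\rangle$, and using the constraint $\bs A^\top\bs z^*+\bs U^\top\bs w^*+\bs v^*=\bs0$ plus Cauchy--Schwarz against the $\mc M$-norm gives the upper bound with the $3R_0+\|\bs x^*\|/\sqrt{\sigma}$ constant; the lower bound comes symmetrically from weak duality, bounding $h\ge -\langle \bs x^*,\bs A^\top\bar{\bs z}^{k+1}+\bar{\bs s}^{k+1}+\bar{\bs v}^{k+1}\rangle = -\langle\bs x^*,\text{(4th block of }\mc R)\rangle$ and then using $\|\bs x^*\|\cdot\|\mc R\|\le (\|\bs x^*\|/\sqrt\sigma)\cdot R_0/(k+1)$ after the residual estimate. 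I expect the main obstacle to be the careful bookkeeping in step two: tracking how the sGS half-step corrections $\bar{\bs s}^{k+1/2}$, $\bar{\bs s}^{k+1}$ and the intermediate $\bar{\bs w}^{k+1/2}$ recombine so that the KKT residual really collapses to a clean multiple of $\bar{\bs u}^{k+1}-\bs u^k$ in the $\mc M$-geometry — getting the constants sharp (rather than merely finite) requires using the precise block structure of $\mc M$ and the identity $\mc T+\sigma\bs A_U^\top\bs A_U\succ\bs0$ from Proposition~\ref{pro:sgs}, and it is easy to lose a factor here if the half-steps are not substituted back in the right order.
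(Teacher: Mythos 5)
Your proposal is correct and follows essentially the same route as the paper: the paper's proof simply establishes, via Lemma~\ref{lem:equiv} and Proposition~\ref{pro:sgs}, that Algorithm~\ref{alg:hprdop} is Algorithm~\ref{alg:sphpr} with $\mc T=\mc S+\hat{\mc S}$, i.e., the accelerated pADMM of \cite{sun2025accpadmm} with $\rho=\alpha=2$, and then cites its complexity theorem directly. Your sketch merely unpacks the content of that cited result (the Halpern fixed-point residual bound in the $\mc M$-norm and its translation into the KKT residual and dual objective error), which is consistent with the constants in the statement.
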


\subsection{An Efficient Implementation of dHPR}

In this subsection, we present an efficient implementation of Algorithm~\ref{alg:hprdop}. To begin with, we derive the update formulas for each subproblem in Algorithm~\ref{alg:hprdop}. To be specific, for any $k\geq0$, $i\in \mathcal{N}$, the update of $\bar{v}_i^{k+1}$ is given by
\begin{align}\label{eq:updata_v}
    \notag\bar{v}_i^{k+1}&=\argmin_{v\in\mb R^{p}} \left\{ r^*(v)-\langle x_i^k,\, v \rangle + \frac{\sigma}{2}\left\| A_i^\top z_i^k+s_i^k+v \right\|^2 \right\} \\
    \notag&=\prox_{\tfrac{r_i^*}{\sigma}}\left( -A_i^\top z_i^k-s_i^k+\frac{x_i^k}{\sigma} \right) \\
    &=\frac{1}{\sigma}\left[ \phi_i^k-\prox_{\sigma r_i}(\phi_i^k) \right],
\end{align}
where $\phi_i^k=x_i^k-\sigma(A_i^\top z_i^k+s_i^k)$. Then the update of $\bar{x}_i^{k+1}$ becomes
\begin{equation}\label{eq:update_x}
    \bar{x}_i^{k+1}={x}_i^k-\sigma(A_i^\top z_i^k+s_i^k+\bar{v}_i^{k+1})=\prox_{\sigma r_i}(\phi_i^k).
\end{equation}
To simplify the subproblem with respect to $z_i$, we choose
\begin{equation}
    \mc S_z^i=\lambda_A^iI- A_iA_i^\top,
\end{equation}
where $\lambda_A^i\geq\lambda_{\max}(A_iA_i^\top),\,\forall i\in\mc N$.
Then, the update of $\bar{z}_i^{k+1}$ is given by
\begin{align}\label{eq:update_z}
    \notag\bar{z}_i^{k+1} &= \prox_{\tfrac{f^*_i}{\sigma\lambda_A^i}}\left[ -\frac1{\sigma\lambda_A^i}\left( -A_i\bar{x}_i^{k+1}+\sigma A_i(\bar{s}_i^{k+\frac12}+\bar{v}_i^{k+1})-\sigma(\lambda_A^iI-A_iA_i^\top)z_i^k \right)\right] \\
    &= \frac1{\sigma\lambda_A^i}\left[ {\xi}_i^k-\prox_{\sigma\lambda_A^if_i}(\xi_i^k) \right],
\end{align}
where
\begin{align}\label{eq:update_xi}
    \notag\xi_i^k &= \left[ A_i\bar{x}_i^{k+1}-\sigma A_i(\bar{s}_i^{k+\frac12}+\bar{v}_i^{k+1})+\sigma(\lambda_A^iI-A_iA_i^\top)z_i^k \right] \\
    &= A_i\left[ (2\bar{x}_i^{k+1}-x_i^k)-\sigma(\bar{s}_i^{k+\frac12}-s_i^k) \right] + \sigma\lambda_A^i z_i^k.
\end{align}

By combining \eqref{eq:updata_v}-\eqref{eq:update_xi}, we obtain an efficient implementation of dHPR presented in Algorithm~\ref{alg:hprdop2}.

\begin{algorithm}
\caption{An efficient implementation of dHPR}
\label{alg:hprdop2}
\begin{algorithmic}[1] 
\Require Given matrix $W=[a_{ij}]\in\mb R^{N\times N}$ and $\sigma>0$, each agent $i\in\mc N$ selects $\lambda_U\geq1-\lambda_{\min}(W),\,\lambda_A^i\geq\lambda_{\max}(A_iA_i^\top)$, and initializes $(\bs z^0,\,\bs w^0,\,\bs v^0,\,\bs x^0)\in\mc U$ and $\bs s^0=\bs U^\top\bs w^0$.
\For{$k=0,\,1,\,\cdots$} (all agents $i\in\mc N$ perform in parallel)
    \State $\phi_i^k=x_i^k-\sigma(A_i^\top z_i^k+s_i^k)$
    \vspace{1pt}
    \State $\bar{v}_i^{k+1}=\dfrac{1}{\sigma}\left[ \phi_i^k-\prox_{\sigma r_i}(\phi_i^k) \right]$
    \vspace{1pt}
    \State $\bar{x}_i^{k+1}=\prox_{\sigma r_i}\left(\phi_i^k\right)$
    \vspace{1pt}
    \State $\bar{s}_i^{k+\frac{1}{2}}=s_i^k+\dfrac{1}{\sigma\lambda_U}\Big[ 2\bar{x}_i^{k+1}-x_i^k-\sum\limits_{j=1}^Na_{ij}(2\bar{x}_j^{k+1}-x_j^k) \Big]$
    \vspace{1pt}
    \State $\xi_i^k= A_i\Big[ (2\bar{x}_i^{k+1}-x_i^k)-\sigma(\bar{s}_i^{k+\frac12}-s_i^k) \Big] + \sigma\lambda_A^iz_i^k$
    \vspace{1pt}
    \State $\bar{z}_i^{k+1}=\dfrac{1}{\sigma\lambda_A^i}\left[ \xi_i^k-\prox_{\sigma\lambda_A^if_i}(\xi_i^k) \right]$
    \State $\bar{s}_i^{k+1}=\bar{s}_i^{k+\frac12}+\dfrac{1}{\lambda_U}\left[ A_i^\top(z_i^k-\bar{z}_i^{k+1})-\sum\limits_{j=1}^Na_{ij}A_j^\top(z_j^k-\bar{z}_j^{k+1}) \right]$
    \vspace{1pt}
    \State $\hat{u}^{k+1}_{s,i} = 2\bar{u}^{k+1}_{s,i}-u^{k}_{s,i}$
    \vspace{1pt}
    \State $u^{k+1}_{s,i} = \dfrac{1}{k+2}u^{0}_{s,i} + \dfrac{k+1}{k+2}\hat{u}^{k+1}_{s,i}$
\EndFor
\State \Return $\{\bar{u}^{k+1}_{s,i}\}_{i=1}^N$
\end{algorithmic}
\end{algorithm}

\begin{remark}
    We observe that explicit computation of $\bar{v}_i^{k+1}$ can be avoided during the iterative process. The variable $\bar{v}_i^{k+1}$ only needs to be evaluated via \eqref{eq:updata_v} when verifying the stopping criteria, which further enhances the computational efficiency while maintaining the correctness of the algorithm.
\end{remark}

\section{Numerical Experiment}\label{sec:numerical}

This section presents numerical experiments on three distributed optimization problems: LASSO, group LASSO, and $L_1$-regularized logistic regression, to demonstrate the efficiency of dHPR. All numerical experiments were conducted using MATLAB (Version R2024b) on a Windows laptop equipped with an Intel Core i5-1135G7 processor (2.40GHz, 4 cores) and 16GB of RAM.

\subsection{Experimental Setup}

\subsubsection{Communication graph}
The random communication graph $\mathcal{G}$ is generated as an undirected connected graph consisting of $N$ nodes with $\tfrac{\iota N(N-1)}{2}$ edges, where $\iota\in(0,1]$ is the connectivity ratio. The weighted adjacency matrix $W=[a_{ij}]\in\mathbb{R}^{N\times N}$ is constructed following the Metropolis rule \cite[Sec. 2.4]{shi2015extra}:
\begin{align*}
    a_{ij}:=
    \begin{dcases}
        0, & (i,j)\notin\mathcal{E} \text{ and } i\neq j, \\
        \dfrac{1}{\max\limits_{i\in\mathcal{N}}\{d_i\}+1}, & (i,j)\in\mathcal{E}, \\
        1-\dfrac{d_i}{\max\limits_{i\in\mathcal{N}}\{d_i\}+1}, & i=j,
    \end{dcases}
\end{align*}
where $d_i$ is the degree of agent $i$. In all experiments, we set the network size to $N=20$.

\subsubsection{Initial values, metric \& termination criteria}
We initialize all variables as zero vectors: $\bs{s}^0 = \bs{v}^0 = \bs{x}^0 = \bs{0}_{Np}$ and $\bs{z}^0 = \bs{0}_m$, and choose $\lambda_U=1-\lambda_{\min}(W),\,\lambda_A^i=\lambda_{\max}(A_iA_i^\top)$. For fair comparison, we monitor the convergence during implementation by computing the relative residual $\eta_{\text{re}}$ defined as
\begin{align*}
    \eta_{\text{re}}:=\max&\left\{ \frac{\left\|\bar x-\prox_{\sum_{i=1}^Nr_i}(\bar x-\sum_{i=1}^NA_i^\top\nabla f_i(A_i\bar x))\right\|}{1+\|\bar x\|+\left\|\sum_{i=1}^NA_i^\top\nabla f_i(A_i\bar x)\right\|},\, \frac{\|\bs U\bs x\|}{1+\|\bs x\|} \right\},
\end{align*}
where $\bar x=\tfrac{1}{N}\sum_{i=1}^Nx_i$. The algorithms would terminate either when $\eta_{\text{re}}$ achieves the predefined accuracy threshold $\epsilon$, or when the total iteration $k$ reaches the maximum allowable number of iterations $k_{\text{max}}$. Notably, $\eta_{\text{re}}$ incorporates the consensus error $\|\bs U\bs x\|$. Numerical results validating the KKT residual bounds in Theorem~\ref{thm:complexity} are provided in Appendix \ref{app:theorykkt}.

\subsubsection{Baseline algorithms}
To demonstrate the efficiency of dHPR, we compare it with NIDS \cite{li2019nids} and PG-EXTRA \cite{shi2015pgextra}, and adopt the official implementations of NIDS and PG-EXTRA from the original open-source code provided in \cite{li2019nids}. We configured the competing algorithms according to their recommended settings: PG-EXTRA uses a step-size of $1.2/L$, and NIDS employs $1.9/L$, where $L=\max_{i\in\mc{N}}\{\|A_i^\top A_i\|\}$.

\subsubsection{The restart and $\sigma$ update strategies}
Building upon the successful restart strategies and the adaptive updating mechanism for the penalty parameter $\sigma$ developed in HPR-LP \cite{chen2024hprlp} and HPR-QP \cite{chen2025hprqp}, we similarly use a distributed restart and $\sigma$ update strategy for DOPs. 

\subsection{Distributed LASSO and Group LASSO}

Consider a network of $N$ agents that aims to solve the following regularized linear regression problem:
$$
\min_{x\in\mb R^p}\quad \sum_{i=1}^N\frac{1}{2}\|A_ix-b_i\|^2+r_i(x),
$$
where $A_i\in\mb R^{m_i\times p},\,b_i\in\mb R^{m_i}$, and $r_i:\,\mb R^p\rightarrow\mb R$ are given regularizers. We consider two types of regularizers $r_i$ (LASSO and group LASSO) and test the algorithms on both synthetic and real datasets. The detailed settings are as follows:
\begin{itemize}
    \item \textbf{Synthetic data:} We set $x_{\text{true}} = \mathbf{1}_p$ and generate the measurement vectors according to $b_i = A_i x_{\text{true}} + \delta e_i$, where each entry of the data matrix $A_i \in \mb{R}^{m_i \times p}$ and noise vector $e_i \in \mb{R}^{m_i}$ are independently sampled from the standard normal distribution $\mc N(0,1)$. The noise level parameter $\delta$ is fixed at $10^{-2}$.
    
    \item \textbf{Real dataset:} We also test the algorithms on three datasets $\mathtt{abalone},\,\mathtt{mg}$ and $\mathtt{pyrim}$ from the UCI machine learning repository\footnote{\href{https://www.csie.ntu.edu.tw/~cjlin/libsvmtools/datasets/regression.html}{https://www.csie.ntu.edu.tw/~cjlin/libsvmtools/datasets/regression.html}}, and the training samples are randomly and evenly distributed over all the $N$ agents.

    \item\textbf{LASSO:} The regularization term for each agent $i$ is formulated as:
    $$
    r_i(x) = \theta_i\|x\|_1,
    $$
    where $\theta_i = 0.01\|A_i^\top b_i\|_\infty$.

    \item\textbf{Group LASSO:} The group LASSO regularization term for each agent $i$ is defined as:
    $$
    r_i(x) = \theta_{1,i}\|x\|_1 + \theta_{2,i}\sum_{l=1}^g w_l\|x_{G_l}\|_2,
    $$
    where the hyper-parameters are set as $\theta_{1,i} = \theta_{2,i} = 0.01\|A_i^\top b_i\|_{\infty}$. Here, the features are partitioned into $g$ adjacent index groups $\{G_l\}_{l=1}^g$, where each group $G_l \subseteq \{1,\,\ldots,\,p\}$ represents a distinct feature subset. The group weights are assigned as $w_l = \sqrt{|G_l|}$ to account for varying group sizes \cite{zhang2020grouplasso}. The restriction of $x$ to group $G_l$ is denoted by $x_{G_l}$. The group sizes $\{|G_l|\}_{l=1}^g$ are randomly generated such that their expected value approximates $p/g$. In our experiments, we set $g = p/10$ by default.
    \item \textbf{Stopping Criteria:} We stop all algorithms if $\eta_{re} < 10^{-8}$ or the maximum iteration number $k_\text{max} = 20000$ is reached.
\end{itemize}

For the regularized linear regression problems, we construct the random communication graph with $\iota=0.5$. Firstly, we test the scalability of the algorithms using three synthetic datasets with increasing dimensions: $(m_i,\,p)\in\{(10,\,50),\,(100,\,500),\,(1000,\,5000)\}$. The results are shown in Fig. \ref{subfig:lasso_synthetic} and Fig. \ref{subfig:glasso_synthetic} for the LASSO regression and group LASSO regression models, respectively. The results demonstrate that dHPR is scalable and consistently outperforms NIDS and PG-EXTRA on the synthetic data.  

We further evaluate the performance of the algorithms on three UCI datasets: $\mathtt{abalone},\,\mathtt{mg}$ and $\mathtt{pyrim}$. The results are shown in Fig. \ref{subfig:lasso_uci} and Fig. \ref{subfig:glasso_uci} for the LASSO regression and group LASSO regression models, respectively. These results demonstrate that dHPR outperforms NIDS and PG-EXTRA on the real data as well. 

To investigate the impact of network topology on algorithm performance, we evaluate dHPR across three different communication graphs: line, random graph, and complete graph, where the random graph has a connectivity ratio of $\iota=0.2$.
The experiments are conducted using a synthetic dataset with $(m_i,p)=(100,500)$ for demonstration purposes, while other parameters are kept unchanged as in previous experiments. As shown in Figs. \ref{subfig:topology_lasso} and \ref{subfig:topology_lgasso}, dHPR exhibits faster convergence in more densely connected networks (convergence rate: complete graph $>$ random graph $>$ line), which is consistent with previous findings (e.g., \cite{shi2014dadmm,ling2015dlm,li2022tpus,aybat2017pgadmm}). 
This phenomenon further demonstrates the robustness of dHPR under varying communication conditions.

\begin{figure*}[htbp]
\centering
\begin{subfigure}{\linewidth}
	\centering
	\begin{subfigure}{0.3\linewidth}
	\includegraphics[width=\linewidth]{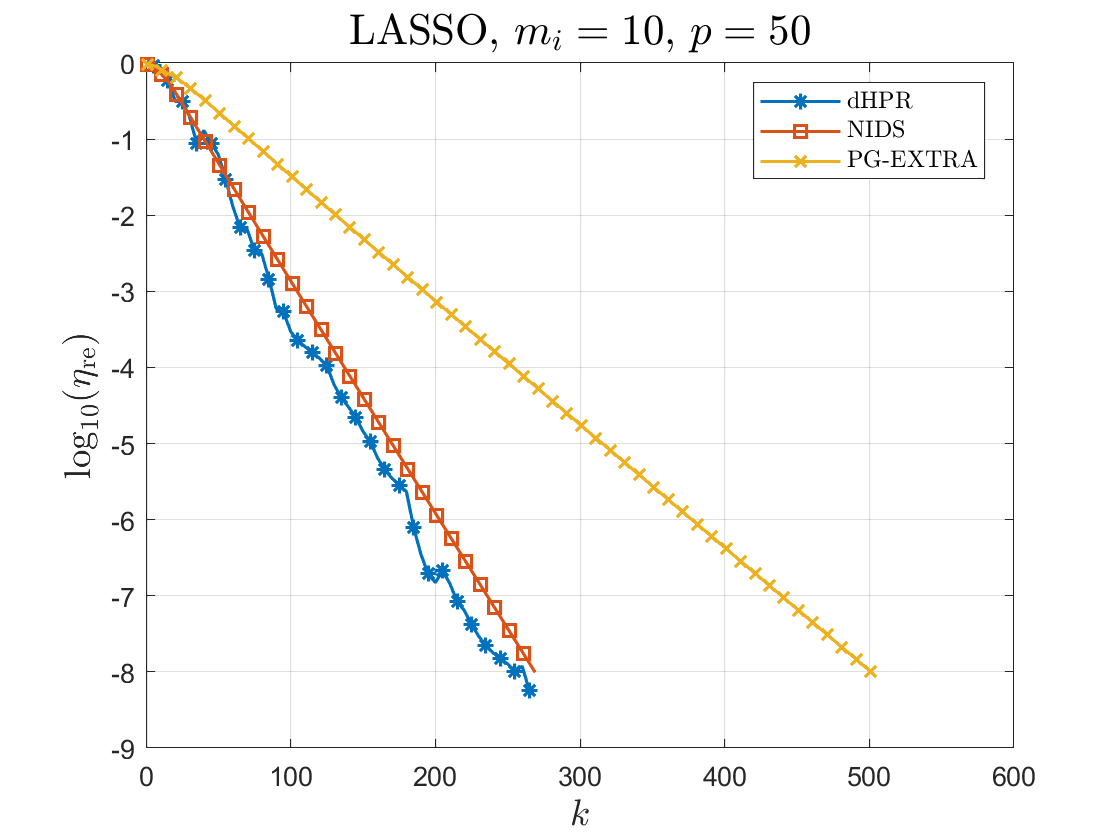}
	\end{subfigure}
	\hfill
	\begin{subfigure}{0.3\linewidth}
	\includegraphics[width=\linewidth]{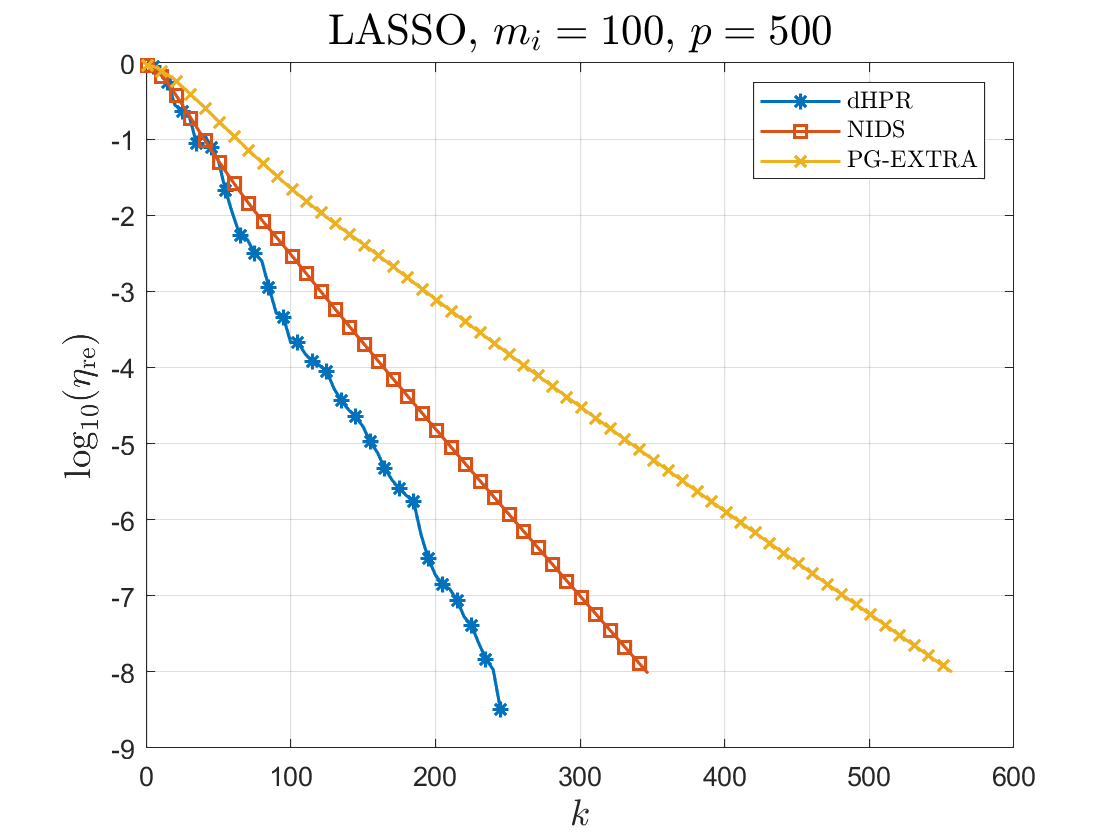}
	\end{subfigure}
	\hfill
	\begin{subfigure}{0.3\linewidth}
	\includegraphics[width=\linewidth]{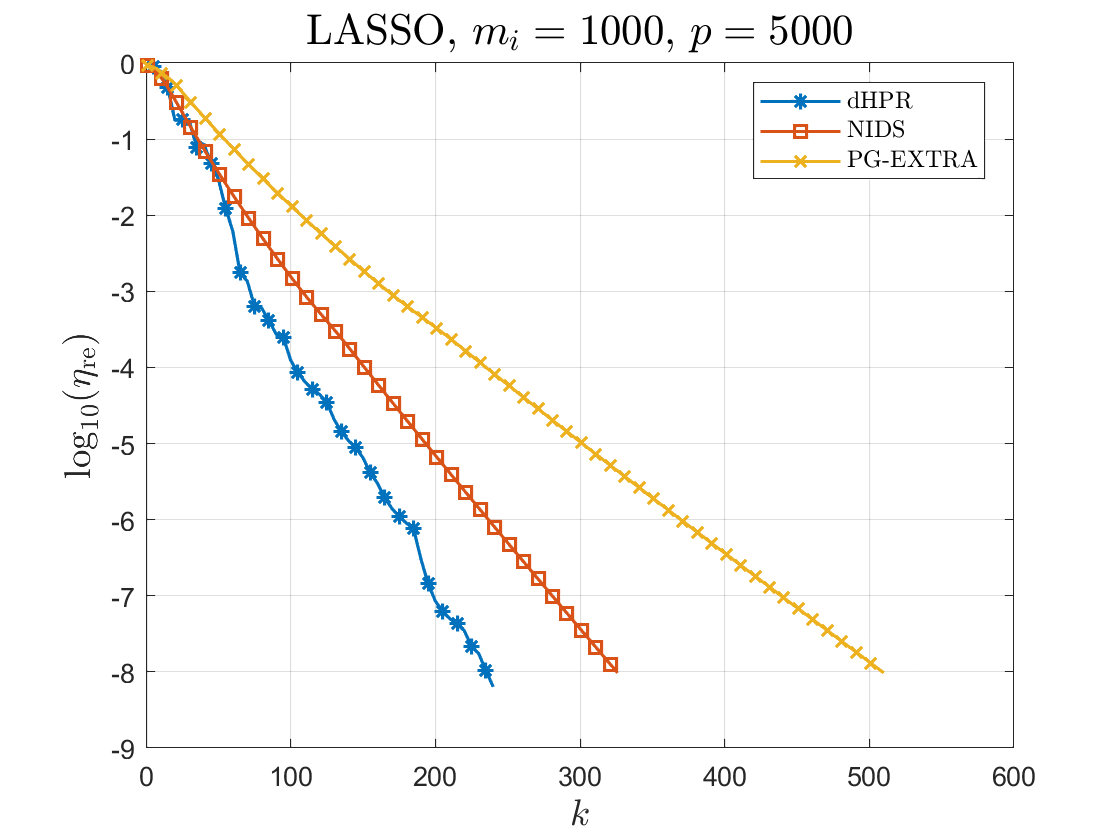}
	\end{subfigure}
	\caption{Synthetic data}
    \label{subfig:lasso_synthetic}
\end{subfigure}

\vspace{1em}
\begin{subfigure}{\linewidth}
	\centering
	\begin{subfigure}{0.3\linewidth}
	\includegraphics[width=\linewidth]{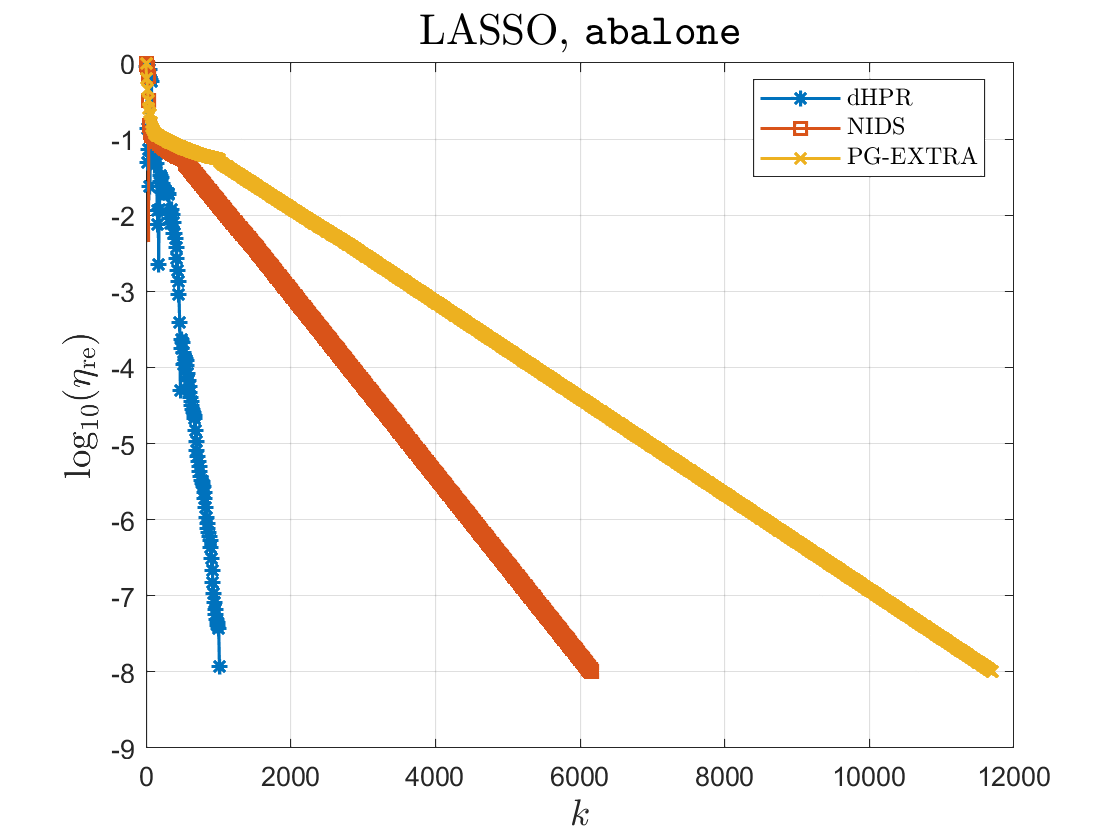}
	\end{subfigure}
	\hfill
	\begin{subfigure}{0.3\linewidth}
	\includegraphics[width=\linewidth]{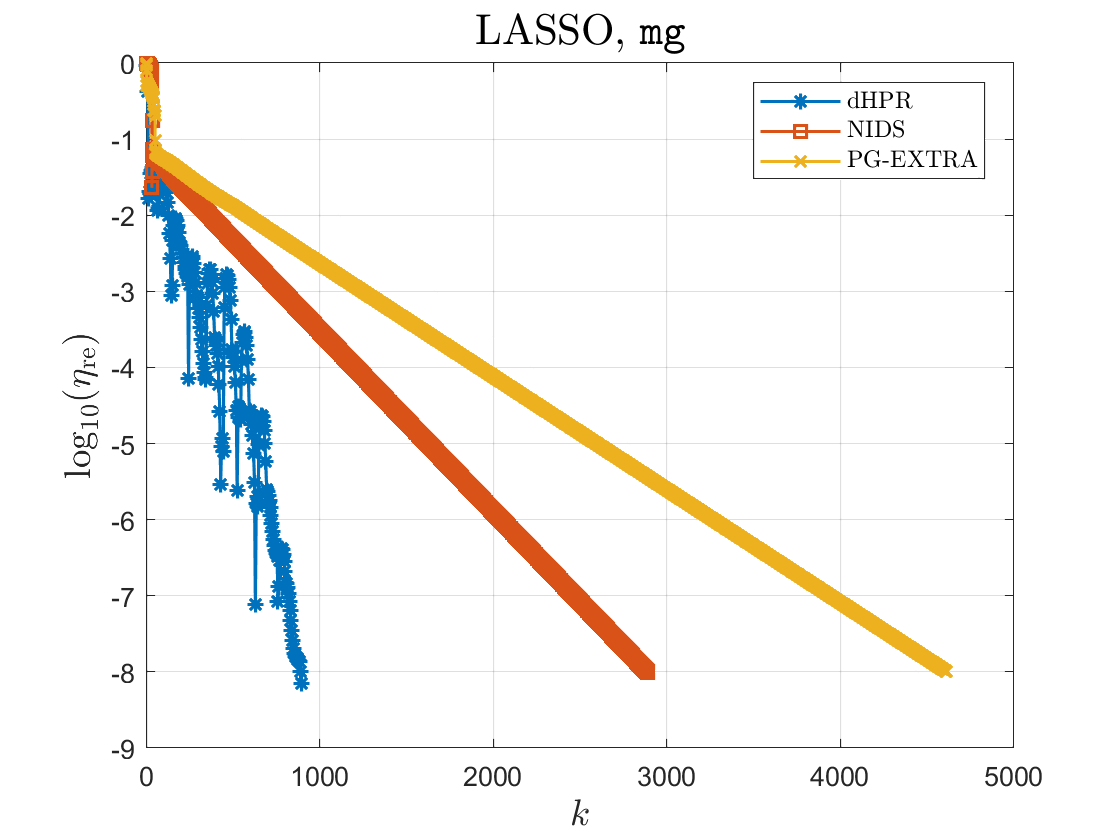}
	\end{subfigure}
	\hfill
	\begin{subfigure}{0.3\linewidth}
	\includegraphics[width=\linewidth]{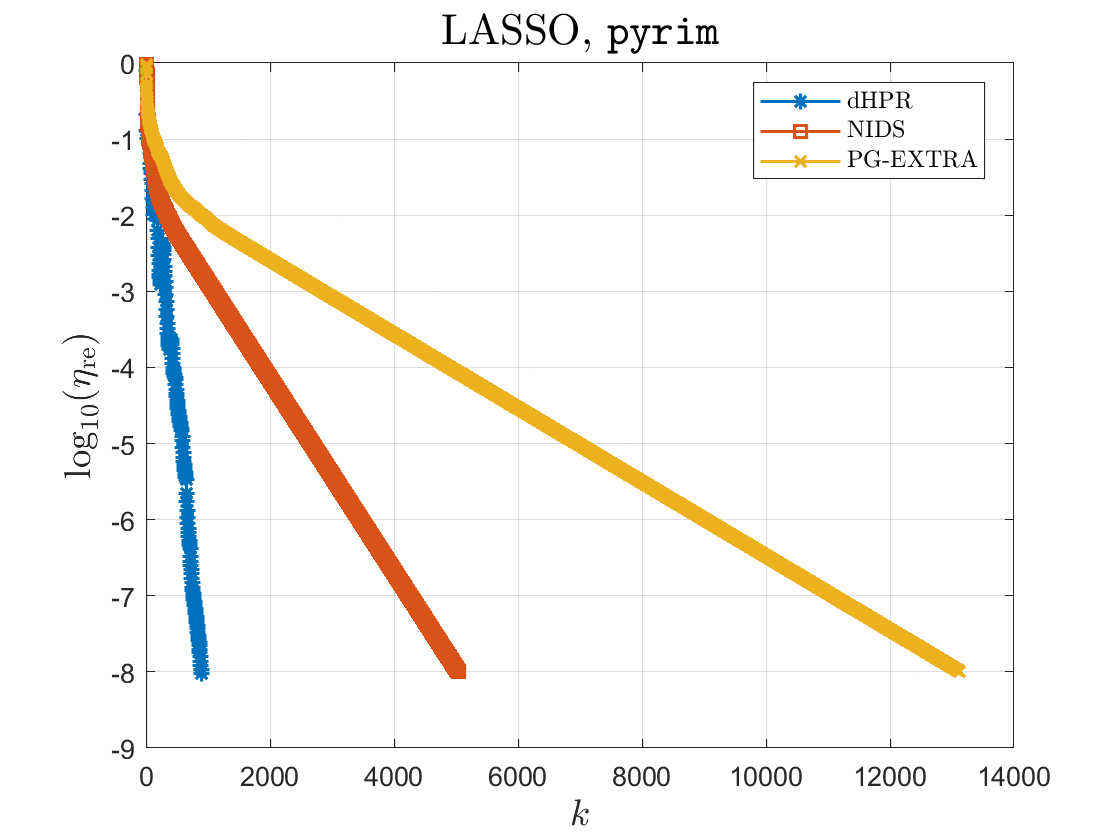}
	\end{subfigure}
	\caption{UCI instances}
    \label{subfig:lasso_uci}
\end{subfigure}
\caption{Distributed LASSO}
\label{fig:lasso}
\end{figure*}

\begin{figure*}[htbp]
\centering
\begin{subfigure}{\linewidth}
	\centering
	\begin{subfigure}{0.3\linewidth}
	\includegraphics[width=\linewidth]{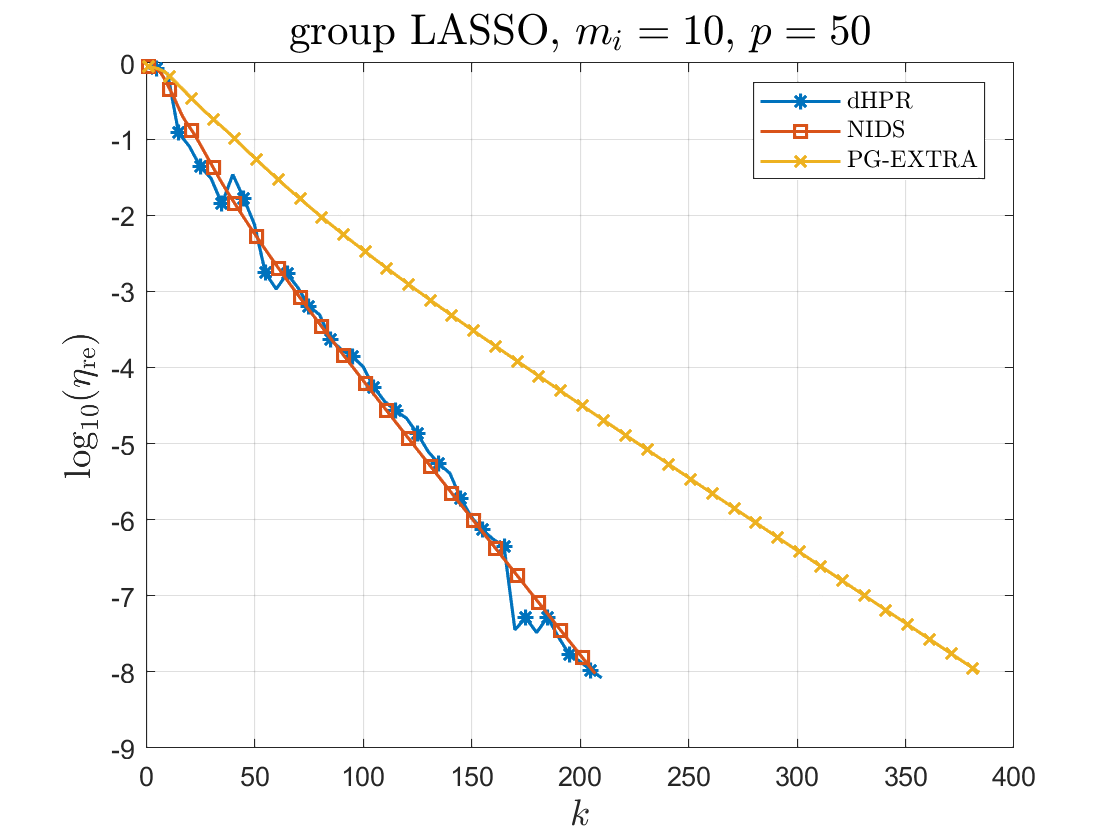}
	\end{subfigure}
	\hfill
	\begin{subfigure}{0.3\linewidth}
	\includegraphics[width=\linewidth]{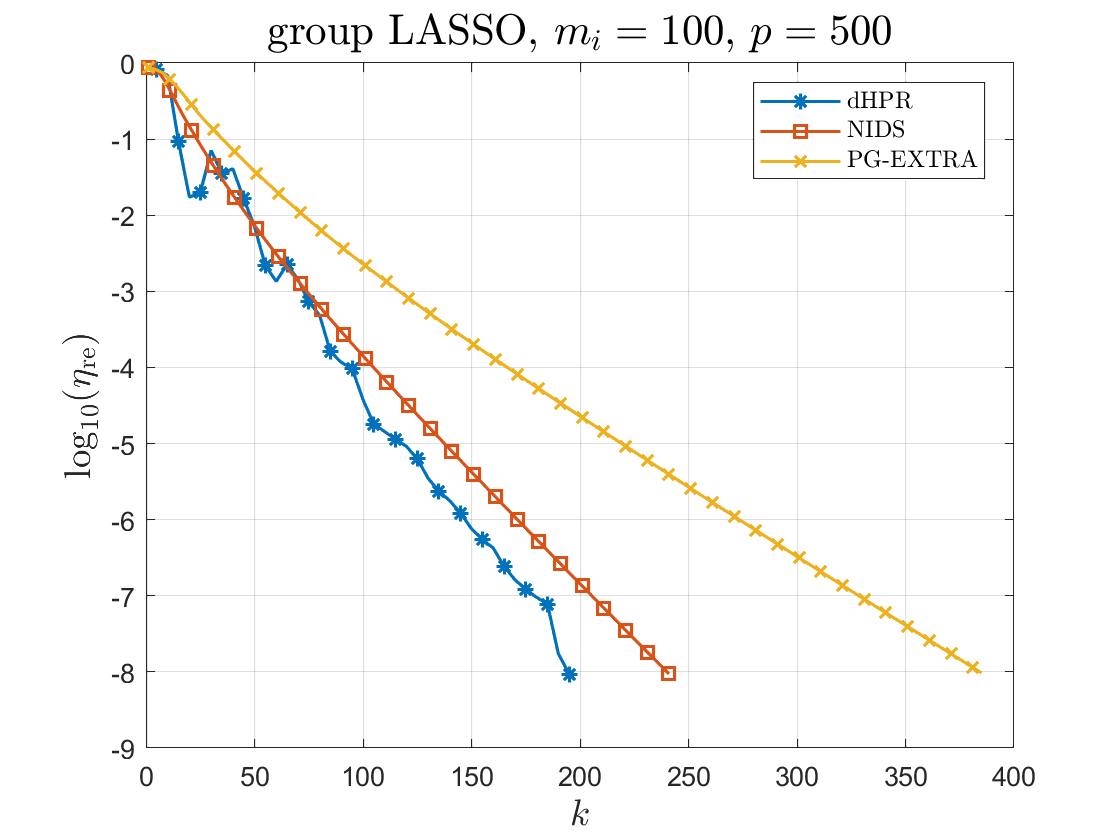}
	\end{subfigure}
	\hfill
	\begin{subfigure}{0.3\linewidth}
	\includegraphics[width=\linewidth]{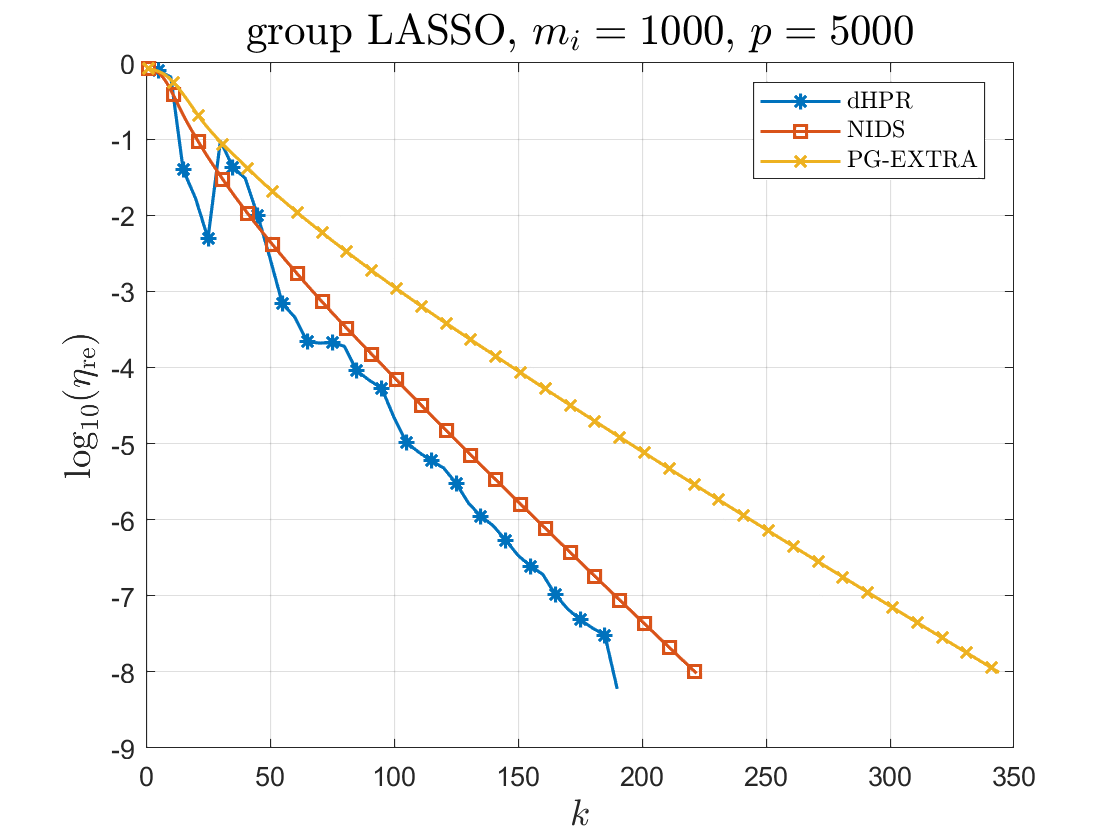}
	\end{subfigure}
	\caption{Synthetic data}
    \label{subfig:glasso_synthetic}
\end{subfigure}

\vspace{1em}
\begin{subfigure}{\linewidth}
	\centering
	\begin{subfigure}{0.3\linewidth}
	\includegraphics[width=\linewidth]{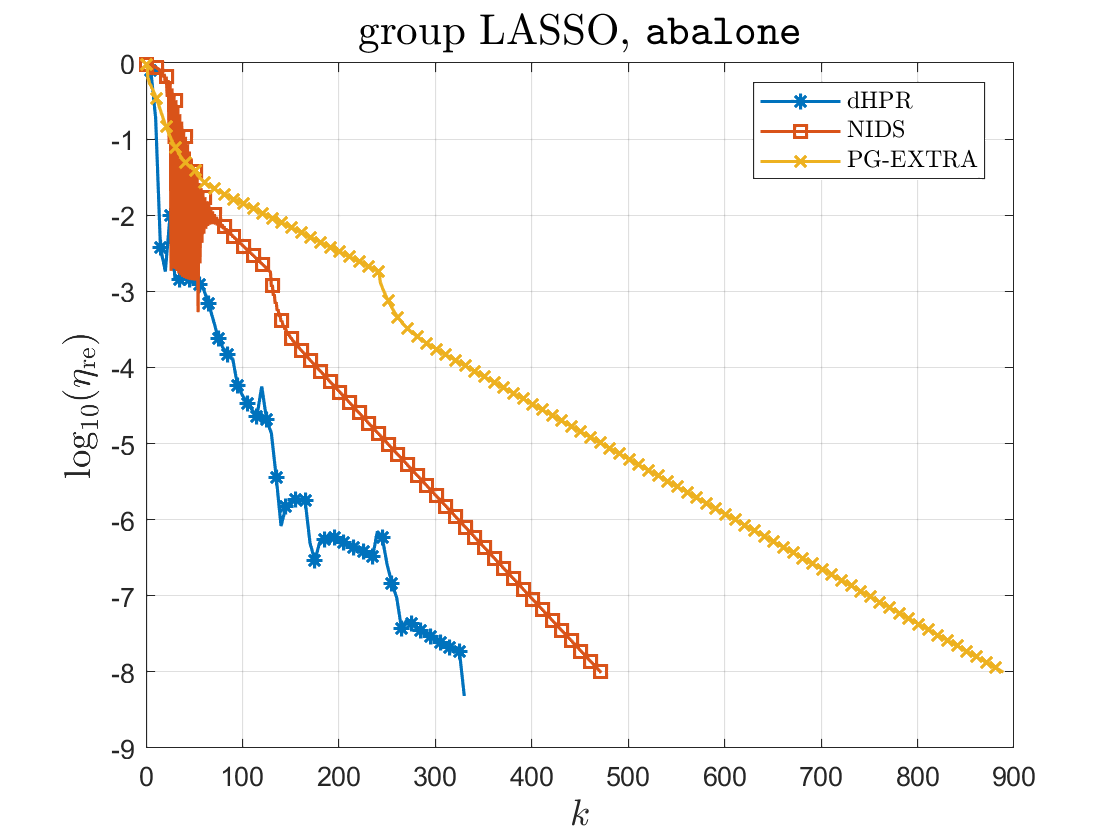}
	\end{subfigure}
	\hfill
	\begin{subfigure}{0.3\linewidth}
	\includegraphics[width=\linewidth]{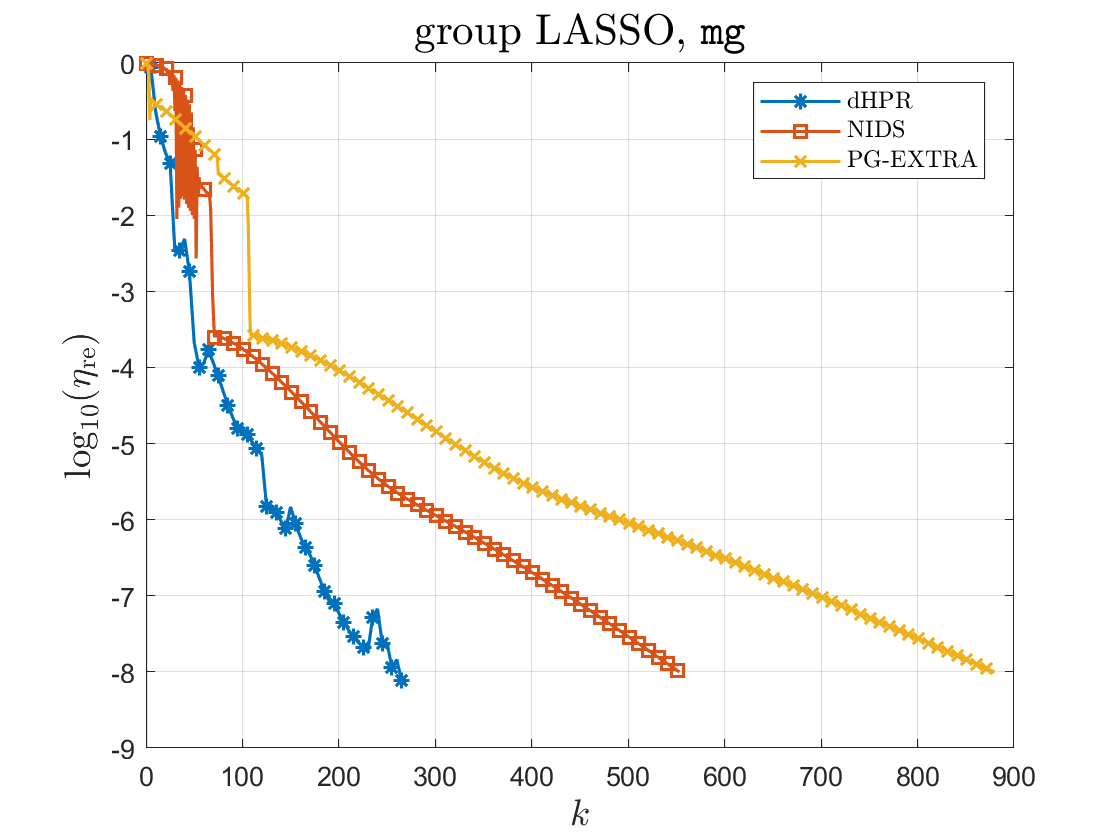}
	\end{subfigure}
	\hfill
	\begin{subfigure}{0.3\linewidth}
	\includegraphics[width=\linewidth]{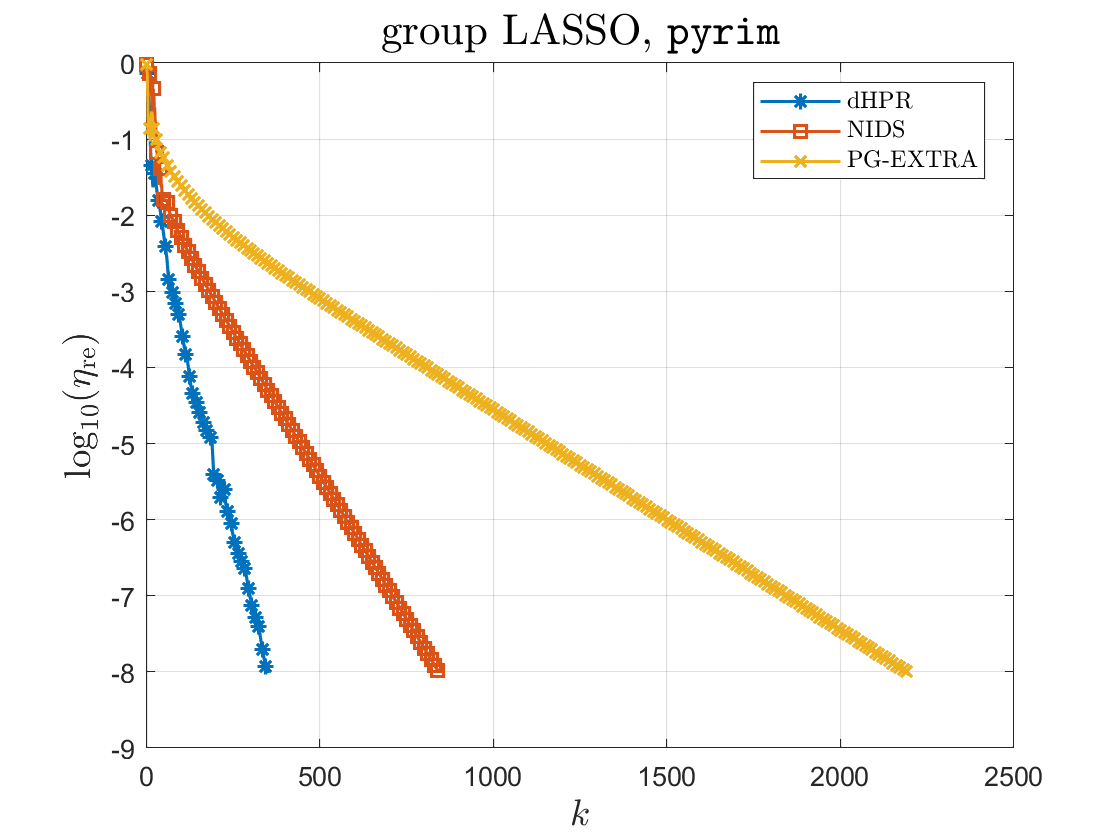}
	\end{subfigure}
	\caption{UCI instances}
    \label{subfig:glasso_uci}
\end{subfigure}
\caption{Distributed group LASSO}
\label{fig:glasso}
\end{figure*}

\subsection{Distributed $L_1$-Regularized Logistic Regression}

Consider a network consisting of $N$ agents collaboratively solving the following problem:
\[
\min_{x \in \mathbb{R}^p}\quad \sum_{i=1}^N \left( \sum_{l=1}^{m_i} \ln[1 + \exp(-b_{il}\bs{a}_{il}^\top x)] + \theta_i \|x\|_1 \right),
\]
where each agent $i$ maintains a private training dataset consisting of $m_i$ samples $\{(\bs{a}_{il},\,b_{il})\}_{l=1}^{m_i}$, with $\bs{a}_{il}\in\mb{R}^p$ representing the feature vector and $b_{il}\in\{1,\,-1\}$ denoting the binary label. 
The detailed settings are as follows:
\begin{itemize}
    \item \textbf{Synthetic data:} Feature vectors $\bs{a}_{il}$ corresponding to positive labels ($b_{il}=1$) are drawn from the normal distribution $\mc{N}(0.1,1)$, and those with negative labels ($b_{il}=-1$) are sampled from $\mc{N}(-0.1,1)$.
    \item\textbf{Real dataset:} We select 23 UCI instances (see Table \ref{tab:logi}), and the training samples are randomly and evenly distributed over all the $N$ agents.
    \item\textbf{Regularizer:} The regularization parameter $\theta_i=0.01\|A_i^\top b_i\|_\infty$ for all $i\in\mc N$.
\end{itemize}

Firstly, we construct a random communication graph with $\iota=0.5$, set $\epsilon=10^{-8},\,k_\text{max}=10000$, and compare dHPR with NIDS and PG-EXTRA using synthetic datasets with increasing dimensions: $(m_i,p)\in\{(10,50),(100,500),(500,1000)\}$. The results are shown in Fig. \ref{subfig:logi_synthetic}, which demonstrates the scalability of dHPR and the superiority of dHPR over NIDS and PG-EXTRA on the synthetic data.

Then, we evaluate the algorithms in 23 UCI instances over the same communication graph, and report the number of iterations required to reach the predetermined accuracies $\epsilon=10^{-4},\,10^{-6}$ and $10^{-8}$ for each algorithm in Table \ref{tab:logi}. For clarity of comparison, the curves of $\log_{10}(\eta_{\text{re}})$ for instances $\mathtt{a9a}$, $\mathtt{ijcnn1}$ and $\mathtt{w3a}$ are plotted for the first $10000$ iterations and shown in Fig. \ref{subfig:logi_uci}. The results demonstrate that dHPR exhibits superior convergence performance over the competing methods, as NIDS and PG-EXTRA either require more iterations or fail to reach the same accuracy within $k_{\text{max}}=50000$.

We also compare the performance of dHPR for solving this problem over different topologies, following the same experimental setup as in LASSO and group LASSO. The results, presented in Fig. \ref{subfig:topology_logi}, demonstrate that dHPR is robust and achieves faster convergence on topologies with higher connectivity, which is consistent with the trends observed in the previous experiments. These results illustrate the effectiveness of dHPR in solving distributed $L_1$-regularized logistic regression problems.

\begin{figure*}[htbp]
\centering
\begin{subfigure}{\linewidth}
	\centering
	\begin{subfigure}{0.3\linewidth}
	\includegraphics[width=\linewidth]{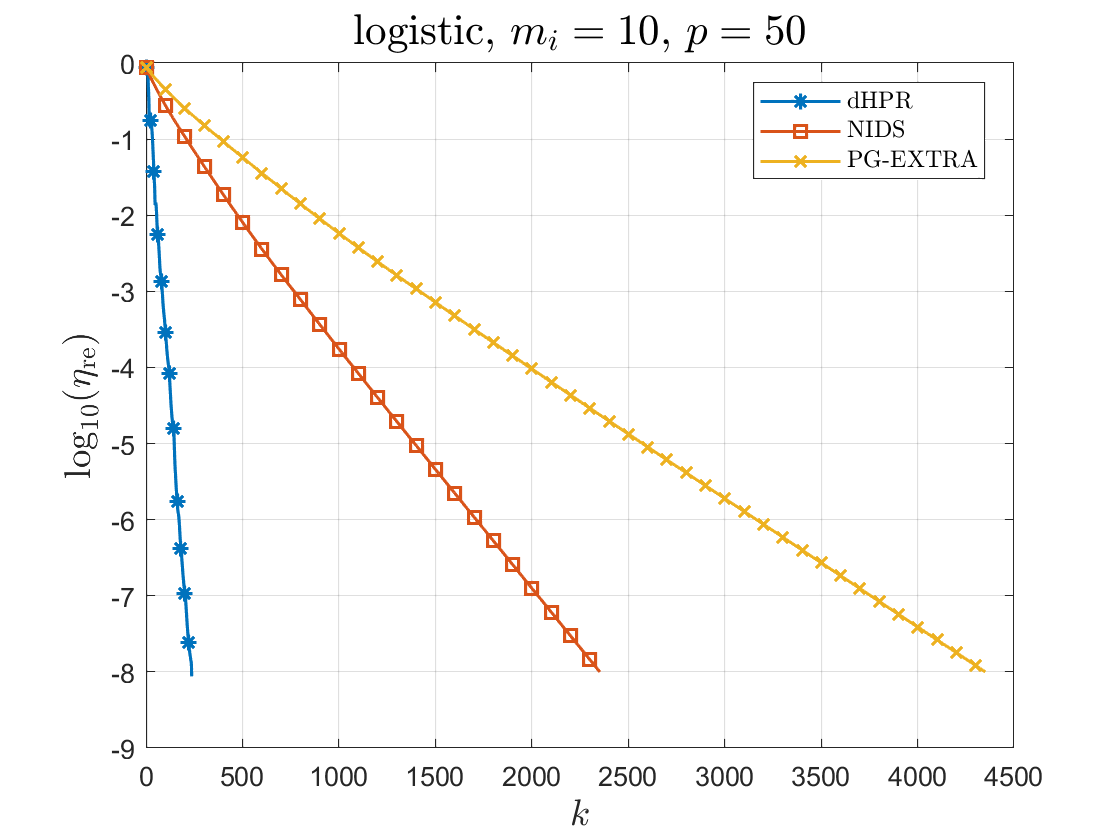}
	\end{subfigure}
	\hfill
	\begin{subfigure}{0.3\linewidth}
	\includegraphics[width=\linewidth]{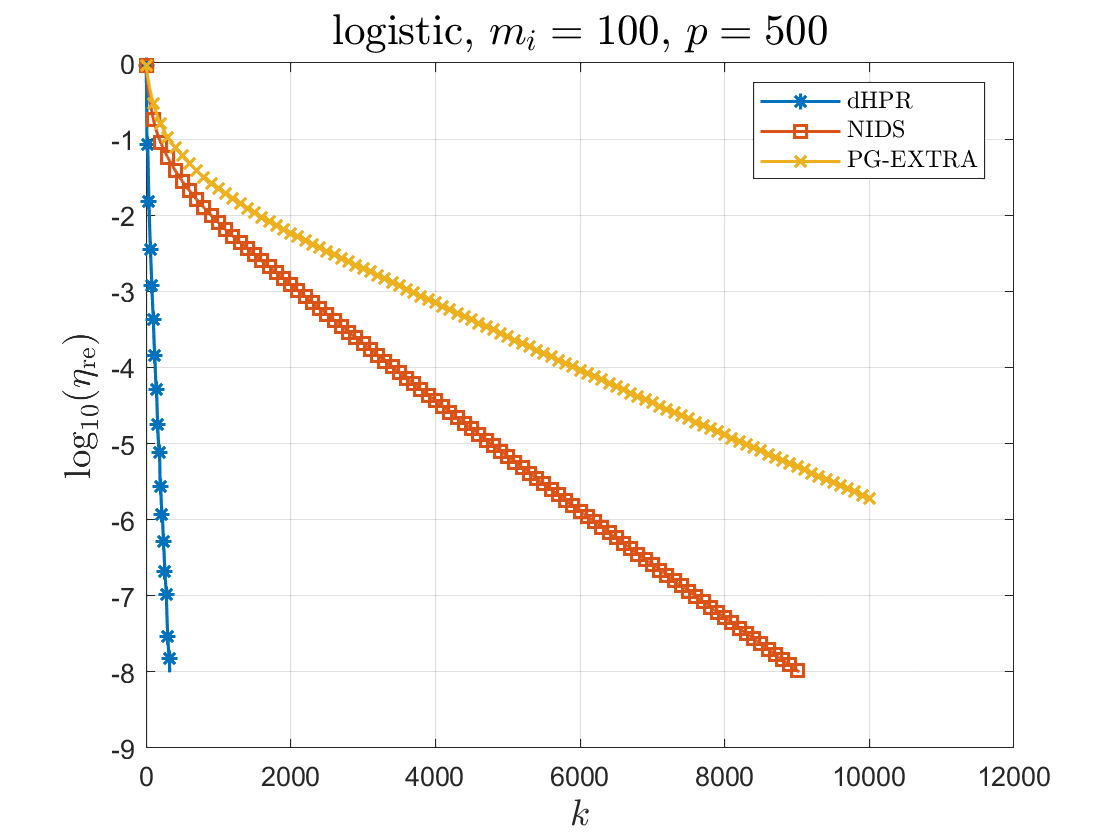}
	\end{subfigure}
	\hfill
	\begin{subfigure}{0.3\linewidth}
	\includegraphics[width=\linewidth]{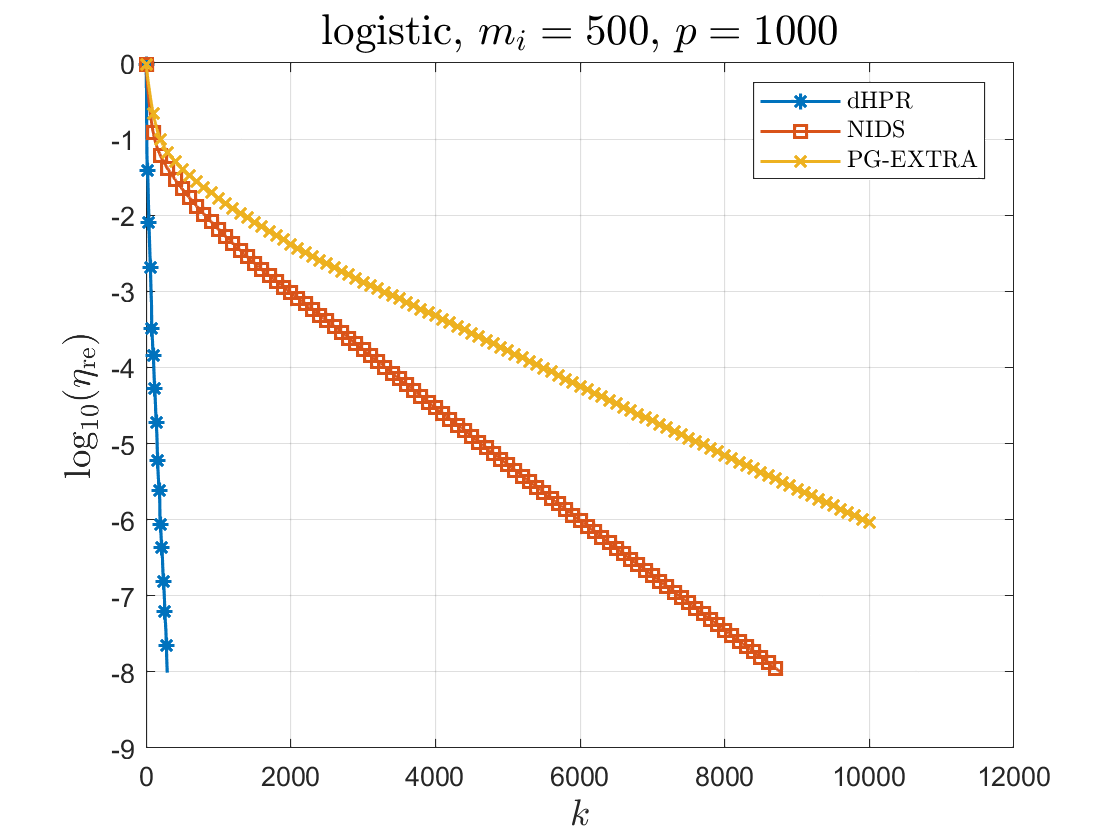}
	\end{subfigure}
	\caption{Synthetic data}
    \label{subfig:logi_synthetic}
\end{subfigure}

\vspace{1em}
\begin{subfigure}{\linewidth}
	\centering
	\begin{subfigure}{0.3\linewidth}
	\includegraphics[width=\linewidth]{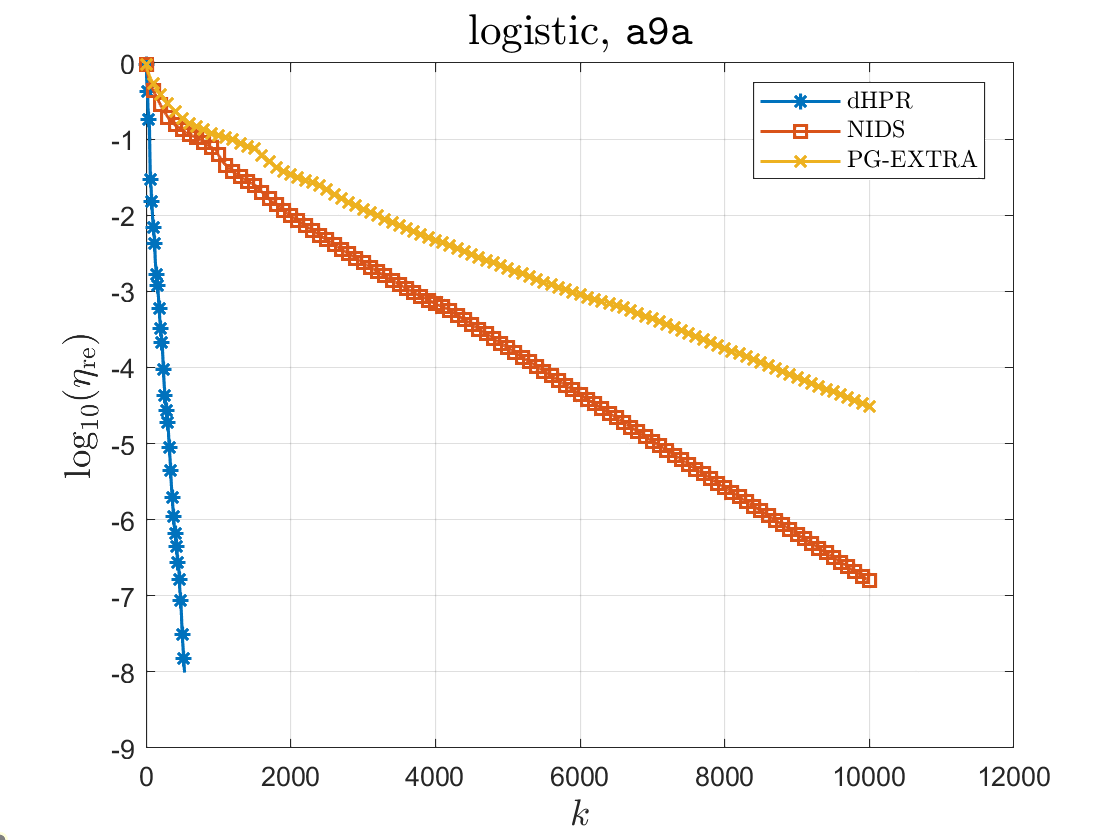}
	\end{subfigure}
	\hfill
	\begin{subfigure}{0.3\linewidth}
	\includegraphics[width=\linewidth]{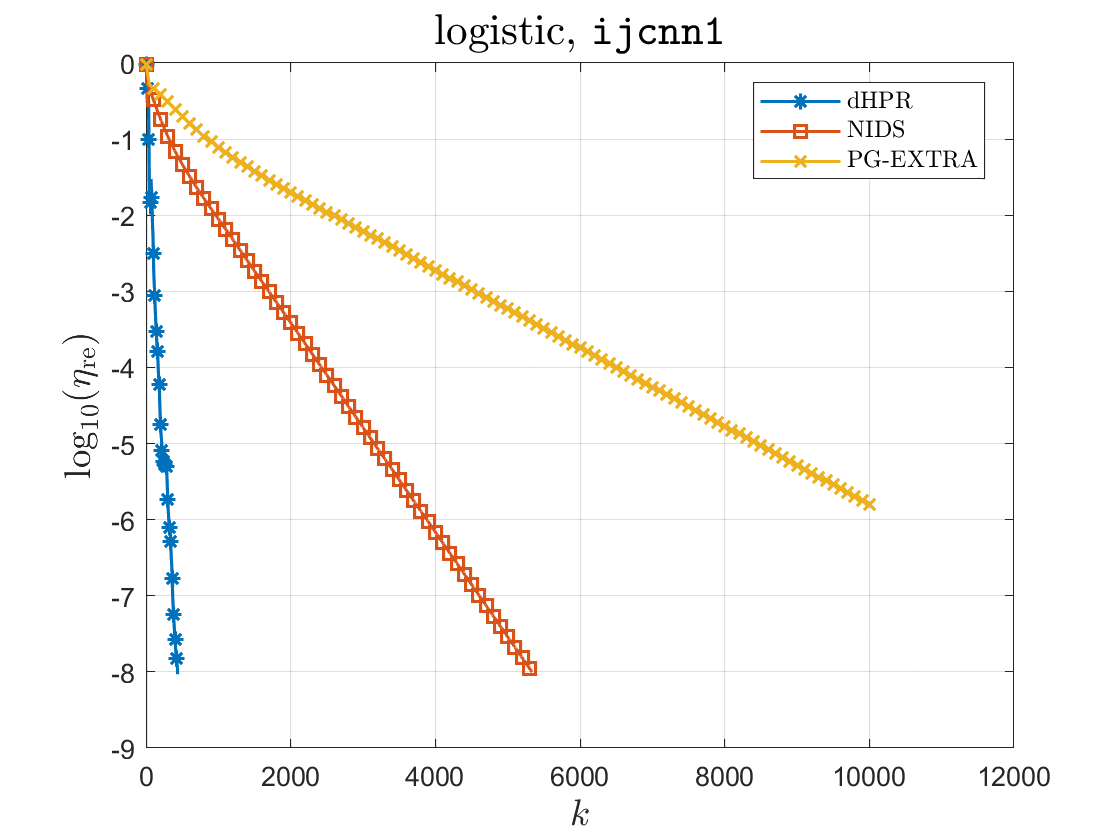}
	\end{subfigure}
	\hfill
	\begin{subfigure}{0.3\linewidth}
	\includegraphics[width=\linewidth]{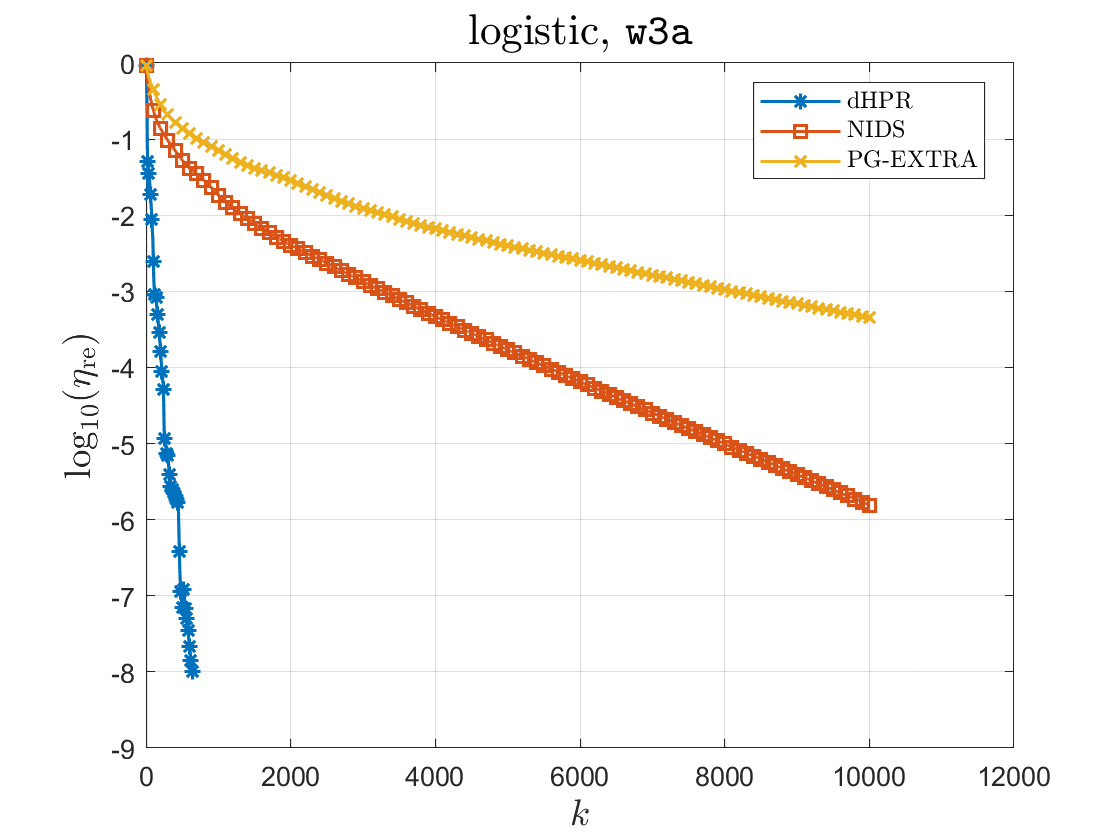}
	\end{subfigure}
	\caption{UCI instances}
    \label{subfig:logi_uci}
\end{subfigure}
\caption{Distributed $L_1$-regularized logistic regression}
\label{fig:logi}
\vspace{1em}
\resizebox{\textwidth}{!}{%
\begin{tabular}{c>{\bfseries}cccc>{\bfseries}cccc>{\bfseries}ccc}
\toprule
\multirow{2}{*}{Instance} & \multicolumn{3}{c}{$\epsilon=10^{-4}$} &  & \multicolumn{3}{c}{$\epsilon=10^{-6}$} &  & \multicolumn{3}{c}{$\epsilon=10^{-8}$} \\ 
\cline{2-4} \cline{6-8} \cline{10-12} 
\addlinespace
 & dHPR & NIDS & PG-EXTRA &  & dHPR & NIDS & PG-EXTRA &  & dHPR & NIDS & PG-EXTRA \\ 
\midrule
$\mathtt{a1a}$                       & 237     & 5333  & 9014     &  & 400     & 8636  & 14598    &  & 552     & 12110 & 20470    \\
$\mathtt{a2a}$                       & 237     & 5224  & 8640     &  & 372     & 8391  & 13877    &  & 512     & 11560 & 19117    \\
$\mathtt{a3a}$                       & 241     & 5255  & 8692     &  & 380     & 8414  & 13918    &  & 533     & 11575 & 19147    \\
$\mathtt{a4a}$                       & 255     & 5501  & 9072     &  & 418     & 8763  & 14452    &  & 564     & 12025 & 19834    \\
$\mathtt{a5a}$                       & 231     & 5798  & 9387     &  & 372     & 9064  & 14675    &  & 507     & 12345 & 19982    \\
$\mathtt{a6a}$                       & 237     & 5377  & 8639     &  & 386     & 8641  & 13881    &  & 557     & 11906 & 19125    \\
$\mathtt{a7a}$                       & 230     & 5474  & 8868     &  & 366     & 8758  & 14190    &  & 509     & 12042 & 19514    \\
$\mathtt{a8a}$                       & 232     & 5421  & 8763     &  & 369     & 8697  & 14016    &  & 511     & 11974 & 19299    \\
$\mathtt{a9a}$                       & 240     & 5419  & 8660     &  & 385     & 8686  & 13881    &  & 533     & 11957 & 19106    \\
$\mathtt{australian}$                & 3509    & 8607  & F        &  & 3516    & 10189 & F        &  & 7241    & 33317 & F       \\
$\mathtt{colon}$-$\mathtt{cancer}$   & 1413    & F     & F        &  & 2585    & F     & F        &  & 3841    & F     & F       \\
$\mathtt{diabetes}$                  & 501     & 2249  & 5686     &  & 719     & 3256  & 8232     &  & 909     & 4263  & 10780    \\
$\mathtt{heart}$                     & 807     & 7740  & 14936    &  & 1439    & 11338 & 21879    &  & 1808    & 14924 & 28853    \\
$\mathtt{ijcnn1}$                    & 169     & 2426  & 6498     &  & 315     & 3879  & 10390    &  & 437     & 5333  & 14282    \\
$\mathtt{svmguide3}$                 & 345     & 3698  & 14098    &  & 521     & 5938  & 22647    &  & 725     & 8178  & 31199    \\
$\mathtt{w1a}$                       & 219     & 5713  & 14642    &  & 448     & 10970 & 28110    &  & 710     & 16282 & 41723    \\
$\mathtt{w2a}$                       & 194     & 10676 & 26409    &  & 445     & 20960 & F        &  & 679     & 31241 & F       \\
$\mathtt{w3a}$                       & 217     & 5549  & 13765    &  & 452     & 10478 & 25988    &  & 642     & 15503 & 38455    \\
$\mathtt{w4a}$                       & 249     & 5528  & 14380    &  & 475     & 10713 & 27866    &  & 795     & 16058 & 41773    \\
$\mathtt{w5a}$                       & 212     & 5283  & 14039    &  & 443     & 10197 & 27097    &  & 703     & 15377 & 40865    \\
$\mathtt{w6a}$                       & 210     & 5387  & 13240    &  & 365     & 9949  & 24450    &  & 526     & 14959 & 36762    \\
$\mathtt{w7a}$                       & 268     & 5291  & 12653    &  & 486     & 10265 & 24547    &  & 773     & 15360 & 36731    \\
$\mathtt{w8a}$                       & 220     & 5423  & 13931    &  & 391     & 10365 & 26629    &  & 554     & 15434 & 39655    \\
\bottomrule
\end{tabular}%
}
\captionof{table}{\small The iteration number of achieving the predetermined accuracy $\epsilon$ in distributed $L_1$-regularized logistic regression problems (`F' = Failure, i.e., the algorithm did not reach the required accuracy within the maximum iteration number).}
\label{tab:logi}
\end{figure*}

\begin{figure*}[htbp]
\centering
    \begin{subfigure}{0.3\linewidth}
        \includegraphics[width=\linewidth]{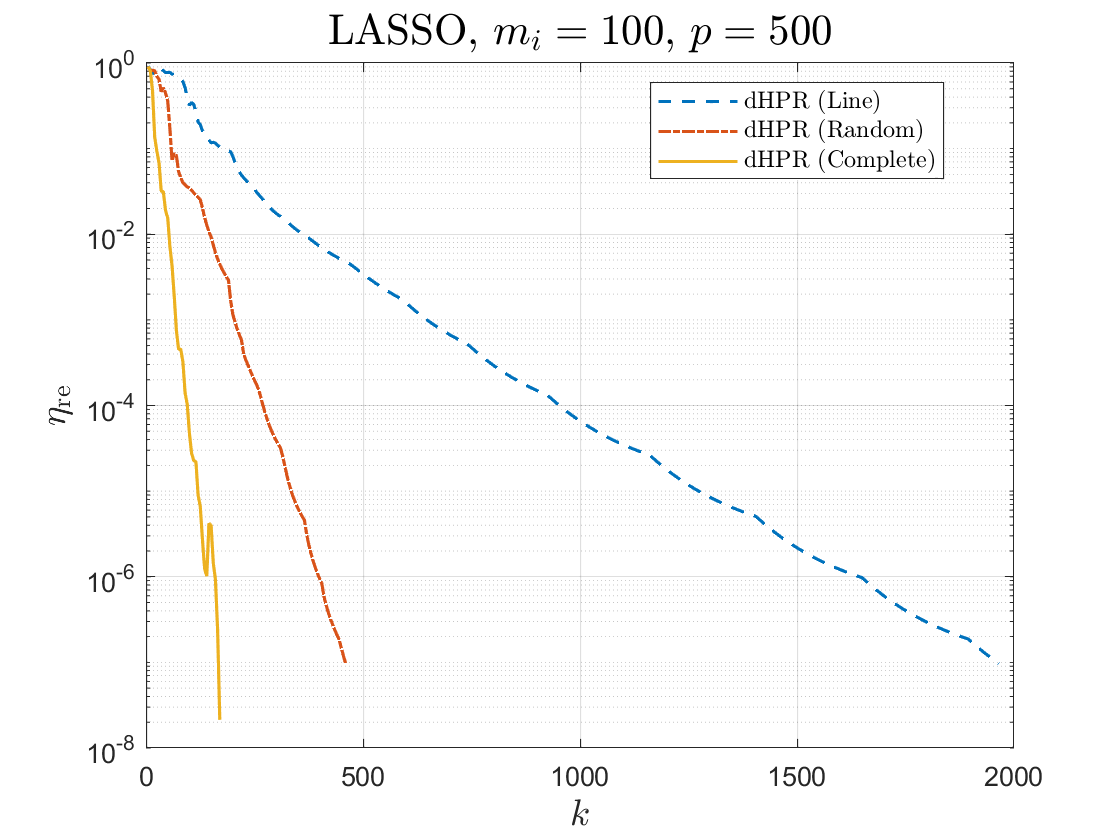}
        \caption{LASSO}
        \label{subfig:topology_lasso}
    \end{subfigure}
    \hfill
    \begin{subfigure}{0.3\linewidth}
        \includegraphics[width=\linewidth]{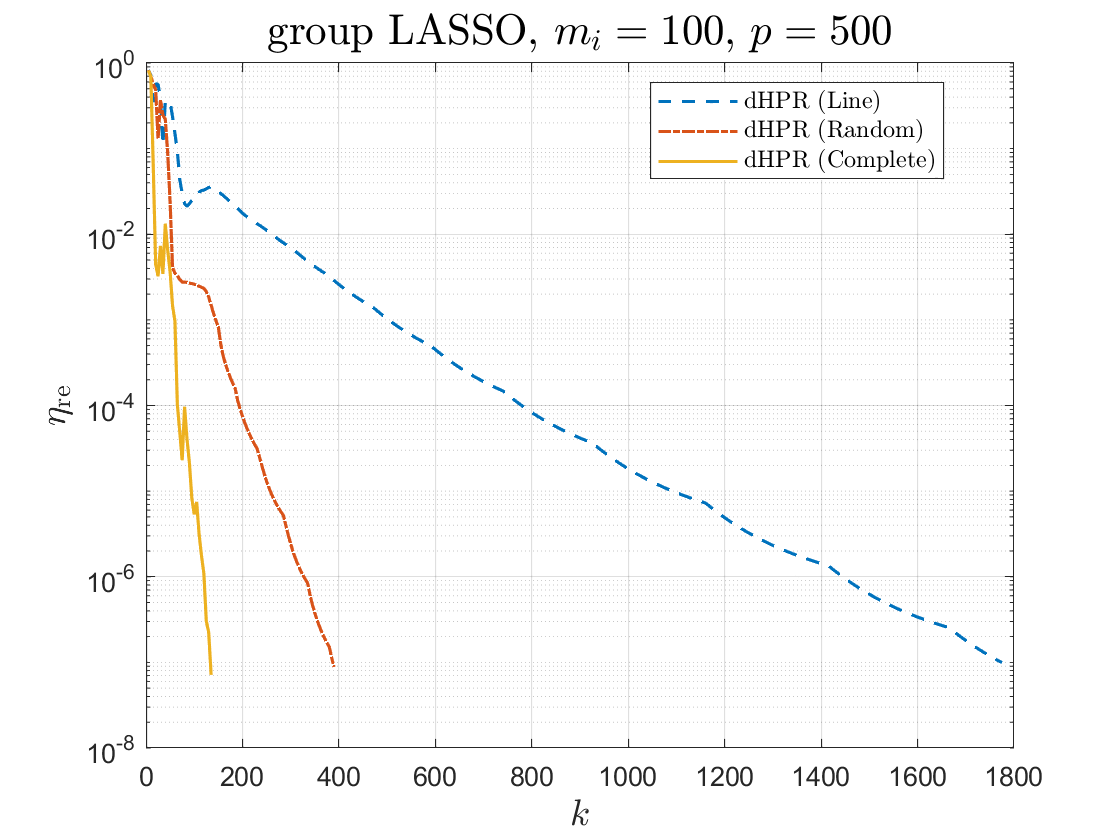}
        \caption{Group LASSO}
        \label{subfig:topology_lgasso}
    \end{subfigure}
    \hfill
    \begin{subfigure}{0.3\linewidth}
        \includegraphics[width=\linewidth]{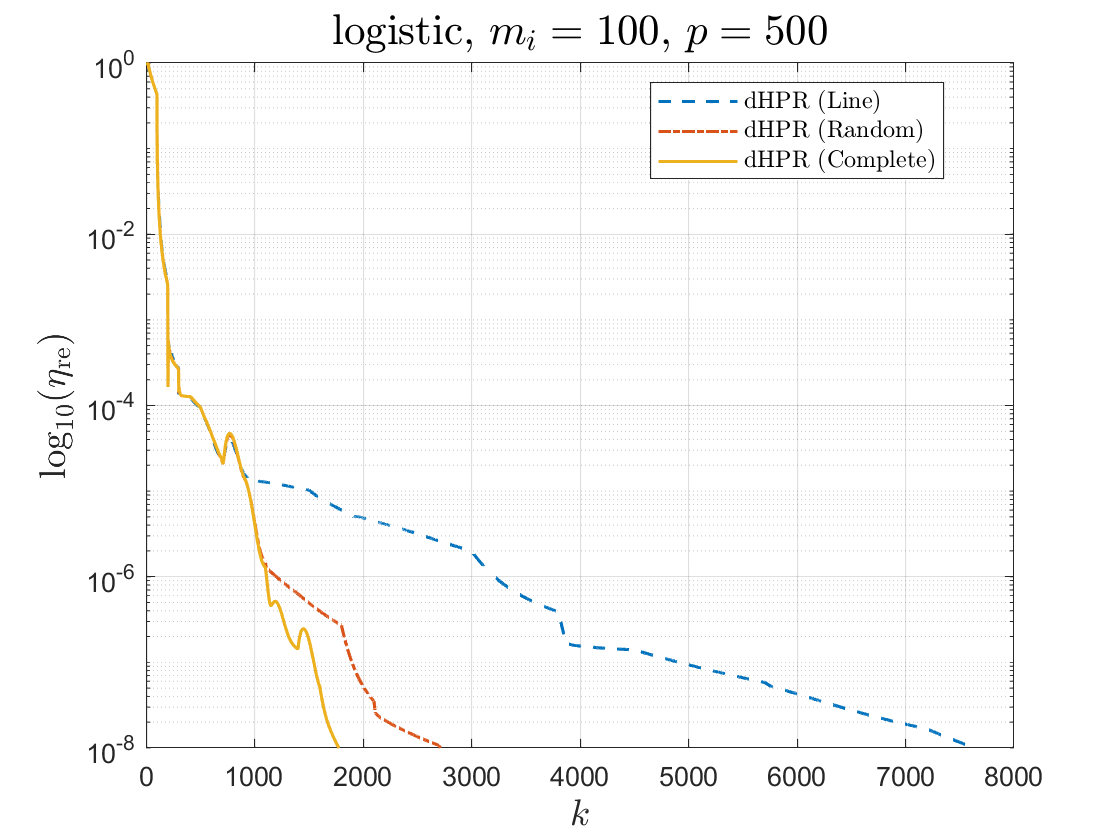}
        \caption{Logistic regression}
        \label{subfig:topology_logi}
    \end{subfigure}
\caption{Effect of topologies}
\label{fig:topology}
\end{figure*}

\section{Conclusion}\label{sec:conclusion}

This paper introduced the dHPR method for solving composite distributed optimization problems with non-smooth objectives. The dHPR achieved an $O(1/k)$ iteration complexity for the KKT residual and dual objective error. We applied the sGS technique to decouple the linear operator and consensus constraint, overcoming the limitation of requiring large proximal operators. Extensive numerical experiments confirm the superior performance and broad applicability of dHPR. As potential future research directions, we will investigate the dHPR algorithm for solving the distributed convex and nonconvex composite optimization problems in stochastic and asynchronous settings.

\newpage
\appendix

\section{Proof of Main Theorems}\label{app:thm_convergence}

To prove Theorems~\ref{thm:convergence} and \ref{thm:complexity}, we first establish the following lemma.
\begin{lemma}\label{lem:equiv}
 Suppose that $\mc S_z^i+A_iA_i^\top\succ\bs0$ for all $i\in\mc N$. Given $\bs u^0=(\bs z^0,\bs w^0,\bs v^0,\bs x^0)\in \mathcal{U}$,
the sequence $\{\bs u_s^{k}\}=\{(\bs z^k,\,\bs s^k,\,\bs v^k,\,\bs x^k)\}$ generated by Algorithm~\ref{alg:hprdop} with $ \bs u_s^0=(\bs z^0, \bs U^{\top}\bs w^0,\bs v^0,\bs x^0)$ is equivalent to the sequence $\{(\bs z^k,\,\bs U^\top\bs w^k,\,\bs v^k,\,\bs x^k)\}$, where $\{\bs u^{k}\}=\{(\bs z^k,\,\bs w^k,\,\bs v^k,\,\bs x^k)\}$ is generated by the following updates with the initial point $\bs u^0$:
    \begin{subequations}\label{eq:sgshpr}
    \begin{empheq}[left=\empheqlbrace]{align}
        \label{eq:sgshpr_v}\bar{\bs v}^{k+1}&=\argmin\limits_{\bs v\in\mb R^{Np}} \left\{L_\sigma(\bs z^k,\bs w^k,\bs v;\bs x^k)\right\},\\
            \label{eq:sgshpr_x}\bar{\bs x}^{k+1}&={\bs x}^k-\sigma(\bs A^\top\bs z^k+\bs U^\top\bs w^k+\bar{\bs v}^{k+1}),\\
            \label{eq:sgshpr_w1}\bar{\bs w}^{k+\frac{1}{2}}&=\argmin\limits_{\bs w\in\mb R^{Np}} \left\{L_\sigma(\bs z^k,\bs w,\bar{\bs v}^{k+1};\bar{\bs x}^{k+1})+\dfrac{\sigma}{2}\left\| \bs w-\bs w^k \right\|^2_{\mc S_w}\right\}, \\
            \label{eq:sgshpr_z}\bar{\bs z}^{k+1}&=\argmin\limits_{\bs z\in\mb R^m} \left\{L_\sigma(\bs z,\bar{\bs w}^{k+\frac12},\bar{\bs v}^{k+1};\bar{\bs x}^{k+1})+\dfrac{\sigma}{2}\left\| \bs z-\bs z^k \right\|^2_{\mc S_z}\right\}, \\
            \label{eq:sgshpr_w2}\bar{\bs w}^{k+1}&=\argmin\limits_{\bs w\in\mb R^{Np}} \left\{L_\sigma(\bar{\bs z}^{k+1},\bs w,\bar{\bs v}^{k+1};\bar{\bs x}^{k+1})+\dfrac{\sigma}{2}\left\| \bs w-\bs w^k \right\|^2_{\mc S_w}\right\}, \\
            \hat{\bs u}^{k+1}&=2\bar{\bs u}^{k+1}-\bs u^k,\\
            \bs u^{k+1}&=\dfrac{1}{k+2}\bs u^0+\dfrac{k+1}{k+2}\hat{\bs u}^{k+1},
    \end{empheq}
    \end{subequations}
    where $\mc S_z$ and ${\mc S}_w$ are given in \eqref{eq:Sz&Sw}, and $L_\sigma$ is defined in \eqref{eq:lagrangian}.
\end{lemma}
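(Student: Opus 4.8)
The plan is to prove the equivalence by induction on $k$, matching the iteration \eqref{eq:sgshpr} line by line with Algorithm~\ref{alg:hprdop} under the substitution $\bs s^k := \bs U^\top\bs w^k$. Recall that \eqref{eq:sgshpr} is precisely Algorithm~\ref{alg:sphpr} with $\mc T = \mc S + \hat{\mc S}$ after expanding the $(\bs z,\bs w)$-subproblem by the sGS rule of Proposition~\ref{pro:sgs} with $\mc S_z = \blkdiag(\mc S_z^1,\ldots,\mc S_z^N)$ and $\mc S_w = \lambda_U I - \bs U^2$; the hypothesis $\mc S_z^i + A_iA_i^\top\succ\bs0$ for all $i$ is exactly $\mc S_z + \bs A\bs A^\top\succ\bs0$ (both sides being block-diagonal), which together with $\mc S_w + \bs U^2 = \lambda_U I\succ\bs0$ legitimizes invoking Proposition~\ref{pro:sgs}. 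Throughout I will use that $\bs U = \sqrt{I-W}\otimes I_p$ is symmetric with $\bs U\bs U^\top = \bs U^2 = (I-W)\otimes I_p$, that $\lambda_U \ge 1-\lambda_{\min}(W) = \lambda_{\max}(\bs U^2)$ makes $\mc S_w\succeq\bs0$, and that $\bs s = \bs U^\top\bs w = \bs U\bs w$ so that $\bs U^2\bs w = \bs U\bs s$.

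For the line-by-line comparison, the updates of $\bar{\bs v}^{k+1}$ and $\bar{\bs x}^{k+1}$ in \eqref{eq:sgshpr_v}--\eqref{eq:sgshpr_x} involve $\bs w^k$ only through $\bs U^\top\bs w^k = \bs s^k$; since $f^*,r^*$ separate across agents and $\bs A$ is block-diagonal, they reduce to the per-agent $\bar v_i^{k+1}$- and $\bar x_i^{k+1}$-updates of Algorithm~\ref{alg:hprdop}. The crux is the two $\bar{\bs w}$-subproblems. Writing the first-order optimality condition of \eqref{eq:sgshpr_w1} and using $\bs U\bs U^\top = \bs U^2$ with $\mc S_w = \lambda_U I - \bs U^2$, the coefficient of $\bar{\bs w}^{k+\frac12}$ collapses to $\sigma\lambda_U I$; left-multiplying by $\bs U^\top$ then produces an equation for $\bar{\bs s}^{k+\frac12} := \bs U^\top\bar{\bs w}^{k+\frac12}$ whose right-hand side depends on the previous iterate only through $(\bs z^k,\bs s^k,\bs x^k)$, and substituting the $\bar{\bs x}^{k+1}$-update to eliminate $\bar{\bs v}^{k+1}$ and $\bs A^\top\bs z^k$ gives exactly $\bar{\bs s}^{k+\frac12} = \bs s^k + \tfrac{1}{\sigma\lambda_U}\bs U^2(2\bar{\bs x}^{k+1}-\bs x^k)$, i.e. the $\bar s_i^{k+\frac12}$-update once $\bs U^2 = (I-W)\otimes I_p$ is read off blockwise. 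The $\bar{\bs z}^{k+1}$-subproblem \eqref{eq:sgshpr_z} sees $\bar{\bs w}^{k+\frac12}$ only via $\bar{\bs s}^{k+\frac12}$ and separates across agents, reproducing the $\bar z_i^{k+1}$-update; subtracting the optimality condition of \eqref{eq:sgshpr_w1} from that of \eqref{eq:sgshpr_w2} and multiplying by $\bs U^\top$ gives $\lambda_U(\bar{\bs s}^{k+1}-\bar{\bs s}^{k+\frac12}) = \bs U^2\bs A^\top(\bs z^k-\bar{\bs z}^{k+1})$, i.e. the $\bar s_i^{k+1}$-update; and the relaxation and Halpern steps, being affine, commute with $\bs U^\top$, yielding Lines~7--8 in the $\bs s$-coordinates with $\bs s^0 = \bs U^\top\bs w^0$.

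Assembling these identities into the induction: assuming $\bs s^k = \bs U^\top\bs w^k$ and that the $(\bs z^k,\bs v^k,\bs x^k)$-iterates agree, the computations above show the next iterates of Algorithm~\ref{alg:hprdop} equal the $\bs U^\top\bs w$-image of the next iterates of \eqref{eq:sgshpr}, the base case being the assumed $\bs u_s^0 = (\bs z^0,\bs U^\top\bs w^0,\bs v^0,\bs x^0)$. The main obstacle — the one step that is not purely mechanical — is verifying that the $\bar{\bs w}$-subproblems genuinely \emph{close} in the $\bs s = \bs U^\top\bs w$ coordinates: a priori $\bar{\bs w}^{k+\frac12}$ and $\bar{\bs w}^{k+1}$ are full vectors in $\mb R^{Np}$, and it is only the tailored choice $\mc S_w = \lambda_U I - \bs U^2$ — which makes the coefficient of $\bar{\bs w}$ in the stationarity equation the scalar $\sigma\lambda_U I$ — that lets every quantity the recursion ever needs be expressed through $\bs U^\top\bs w$ alone, so that the decentralized Algorithm~\ref{alg:hprdop} is a faithful realization of \eqref{eq:sgshpr}. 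I would present that cancellation carefully, as it is the heart of the argument.
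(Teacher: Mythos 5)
Your proposal is correct and follows essentially the same route as the paper: derive the closed-form $\bar{\bs w}$-updates from the stationarity conditions of \eqref{eq:sgshpr_w1} and \eqref{eq:sgshpr_w2}, use the cancellation $\sigma\bs U\bs U^\top+\sigma(\lambda_U I-\bs U^2)=\sigma\lambda_U I$ together with the $\bar{\bs x}^{k+1}$-update to simplify, and then left-multiply by $\bs U^\top$ to pass to the $\bs s$-coordinates. Your explicit remark that the recursion must \emph{close} in $\bs s=\bs U^\top\bs w$ (and that the Halpern and relaxation steps commute with $\bs U^\top$) is exactly the point the paper makes more tersely, so no further changes are needed.
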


\begin{proof}
    Clearly, the updates of $\bar{\bs v}^{k+1},\,\bar{\bs x}^{k+1}$ and $\bar{\bs z}^{k+1}$ in Algorithm~\ref{alg:hprdop} are equivalent to \eqref{eq:sgshpr_v}, \eqref{eq:sgshpr_x} and \eqref{eq:sgshpr_z}, respectively. Next, we consider the updates of $\bar{\bs w}^{k+\frac12}$ and $\bar{\bs w}^{k+1}$, i.e., \eqref{eq:sgshpr_w1} and \eqref{eq:sgshpr_w2}.
    Notice that $\mc S_w=\lambda_UI-\bs U^2$, by the optimality condition of \eqref{eq:sgshpr_w1}, the update of $\bar{\bs w}^{k+\frac12}$ is given by
    \begin{align}\label{eq:update_w1}
        \notag\bar{\bs w}^{k+\frac12} &= \frac{\bs U}{\sigma\lambda_U}\left( \bar{\bs x}^{k+1}-\sigma\bs A^\top\bs z^k-\sigma\bar{\bs v}^{k+1}-\sigma\bs U^\top\bs w^k \right)+\bs w^k \\
        &=\bs w^k+\frac1{\sigma\lambda_U}\bs U(2\bar{\bs x}^{k+1}-\bs x^k),
    \end{align}
    where the second equality holds by \eqref{eq:sgshpr_x}. Similarly, substituting $\bs z^k$ with $\bar{\bs z}^{k+1}$ in the above equality, the update of $\bar{\bs w}^{k+1}$ is given by
    \begin{equation}\label{eq:update_w2}
        \bar{\bs w}^{k+1}=\bar{\bs w}^{k+\frac12}+\frac1{\lambda_U}\bs U\bs A^\top(\bs z^k-\bar{\bs z}^{k+1}).
    \end{equation}
    Multiplying $\bs U^\top$ to both sides of \eqref{eq:update_w1} and \eqref{eq:update_w2} and noticing that $\bs s=\bs U^\top\bs w,\,\bs U^2=I-\bs W$, the proof is completed.
\end{proof}

Based on the above lemma, we now give the proof of Theorem~\ref{thm:convergence}.

\begin{proof} Notice that $\mc T=\mc S+\hat{\mc S}$, where $\mc S$ and $\hat{\mc S}$ are given in \eqref{eq:S&Shat}. According to Proposition~\ref{pro:sgs}, for any $k\geq0$, the updates of $\bs z$ and $\bs w$ in \eqref{eq:sgshpr}, i.e., \eqref{eq:sgshpr_w1}-\eqref{eq:sgshpr_w2}, are equivalent to the following update:
\begin{equation}\label{eq:z&w}
\begin{aligned}
    (\bar{\bs z}^{k+1},\,\bar{\bs w}^{k+1})=\argmin\limits_{(\bs z,\,\bs w)\in\mb R^{m}\times\mb R^{Np}} \left\{L_\sigma(\bs z,\bs w,\bar{\bs v}^{k+1};\bar{\bs x}^{k+1})+\frac{1}{2}\left\| (\bs z,\,\bs w)-(\bs z^k,\,\bs w^k) \right\|^2_{\mc T}\right\}.
\end{aligned}
\end{equation}
Moreover, $\mc T+\sigma\bs A_U^\top\bs A_U$ is positive definite. Thus, \eqref{eq:z&w} admits a unique solution. By Lemma~\ref{lem:equiv}, Algorithm~\ref{alg:hprdop} corresponds to Algorithm~\ref{alg:sphpr} with $\mc T=\mc S+\hat{\mc S}$, which corresponds to the accelerated pADMM \cite[Alg. 3.1]{sun2025accpadmm} with $\rho=2$ and $\alpha=2$. Based on \cite[Cor. 3.5]{sun2025accpadmm}, the global convergence result of Algorithm~\ref{alg:hprdop} in Theorem~\ref{thm:convergence} holds. \end{proof}   

Based on Lemma \ref{lem:equiv}, we now give the proof of Theorem~\ref{thm:complexity}.
\begin{proof}
    According to Lemma~\ref{thm:convergence}, Algorithm~\ref{alg:hprdop} corresponds to an HPR method with the semi-proximal term $\mc T=\mc S+\hat{\mc S}$, which is an accelerated pADMM \cite[Alg. 3.1]{sun2025accpadmm} with $\rho=2$ and $\alpha=2$.
    Thus, based on the iteration complexity results for the accelerated pADMM established in \cite[Thm. 3.7]{sun2025accpadmm}, we can derive the iteration complexity of Algorithm~\ref{alg:hprdop} in Theorem~\ref{thm:complexity}.
\end{proof}

\section{Additional Numerical Experiments}\label{app:experiment}

\subsection{dHPR vs dual L-HPR}\label{app:dual-lhpr}

Directly applying Algorithm~\ref{alg:sphpr} to the dual problem \eqref{eq:dualdop} with $\mc T=\sigma(\lambda_{AU}I-\bs A_U^\top\bs A_U)$ leads to the dual linearized-HPR (dual L-HPR) method in Algorithm~\ref{alg:dual-lhpr}.

\begin{algorithm}[ht]
\caption{Dual L-HPR: Dual linearized HPR method for solving the dual problem \eqref{eq:dualdop}}
\label{alg:dual-lhpr}
\begin{algorithmic}[1] 
\Require Let $\mc T=\sigma(\lambda_{AU}I-\bs A_U^\top\bs A_U)$ with $\lambda_{AU}\geq\lambda_{\max}(\bs A_U)$. Choose $\bs u^0=(\bs z^0,\,\bs w^0,\,\bs v^0,\,\bs x^0)\in\mc U$ and set $\sigma>0$.
\For{$k=0,\,1,\,\cdots$}
    \State $\bar{\bs v}^{k+1}=\argmin\limits_{\bs v\in\mb R^{Np}} \left\{L_\sigma(\bs z^k,\bs w^k,\bs v;\bs x^k)\right\}$
    \State $\bar{\bs x}^{k+1}={\bs x}^k-\sigma(\bs A^\top\bs z^k+\bs U^\top\bs w^k+\bar{\bs v}^{k+1})$
    \State
    $(\bar{\bs z}^{k+1},\bar{\bs w}^{k+1})=\argmin\limits_{(\bs z,\bs w)\in\mb R^{m}\times\mb R^{Np}} \left\{L_\sigma(\bs z,\bs w,\bar{\bs v}^{k+1};\bar{\bs x}^{k+1})+\dfrac{1}{2}\left\| (\bs z,\bs w)-(\bs z^k,\bs w^k) \right\|^2_{\mc T}\right\}$
    \State $\hat{\bs u}^{k+1}=2\bar{\bs u}^{k+1}-\bs u^k$
    \State $\bs u^{k+1}=\dfrac{1}{k+2}\bs u^0+\dfrac{k+1}{k+2}\hat{\bs u}^{k+1}$
\EndFor
\State \Return $\bar{\bs u}^{k+1}$
\end{algorithmic}
\end{algorithm}

Fig.~\ref{fig:vslhpr} presents a comparison of dHPR (Algorithm~\ref{alg:hprdop}) and the distributed version of dual L-HPR (Algorithm~\ref{alg:dual-lhpr}) for solving distributed LASSO problems, evaluated on both synthetic ($m=10,\,p=50$) and real-world (UCI $\mathtt{pyrim}$) datasets, where the experimental setup follows that in Section~\ref{sec:numerical}. The curves of relative KKT residual ($\eta_{\text{KKT}}$ defined in \eqref{eq:rekkt}) demonstrate that dHPR consistently outperforms dual L-HPR, exhibiting significantly faster convergence in both cases. This superior performance can be attributed to the implementation of the sGS decomposition technique in dHPR, which yields a smaller proximal operator $\mc S+\hat{\mc S}$ (where $\mc S$ and $\hat{\mc S}$ are defined in \eqref{eq:S&Shat}) compared to dual L-HPR, thereby enhancing computational efficiency.

\begin{figure}[htbp]
\centering
    \begin{subfigure}{0.45\linewidth}
        \includegraphics[width=\linewidth]{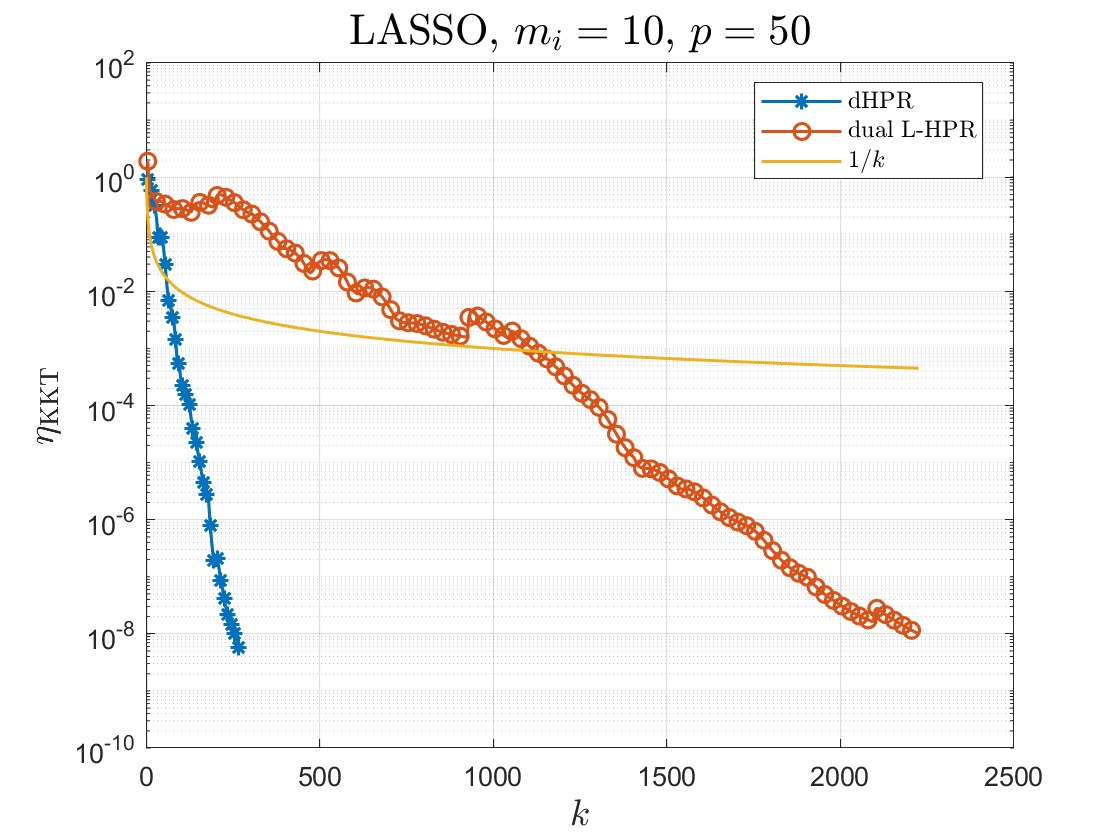}
        \caption{Synthetic data}
        \label{subfig:vslhpr_synthetic}
    \end{subfigure}
    \hfill
    \begin{subfigure}{0.45\linewidth}
        \includegraphics[width=\linewidth]{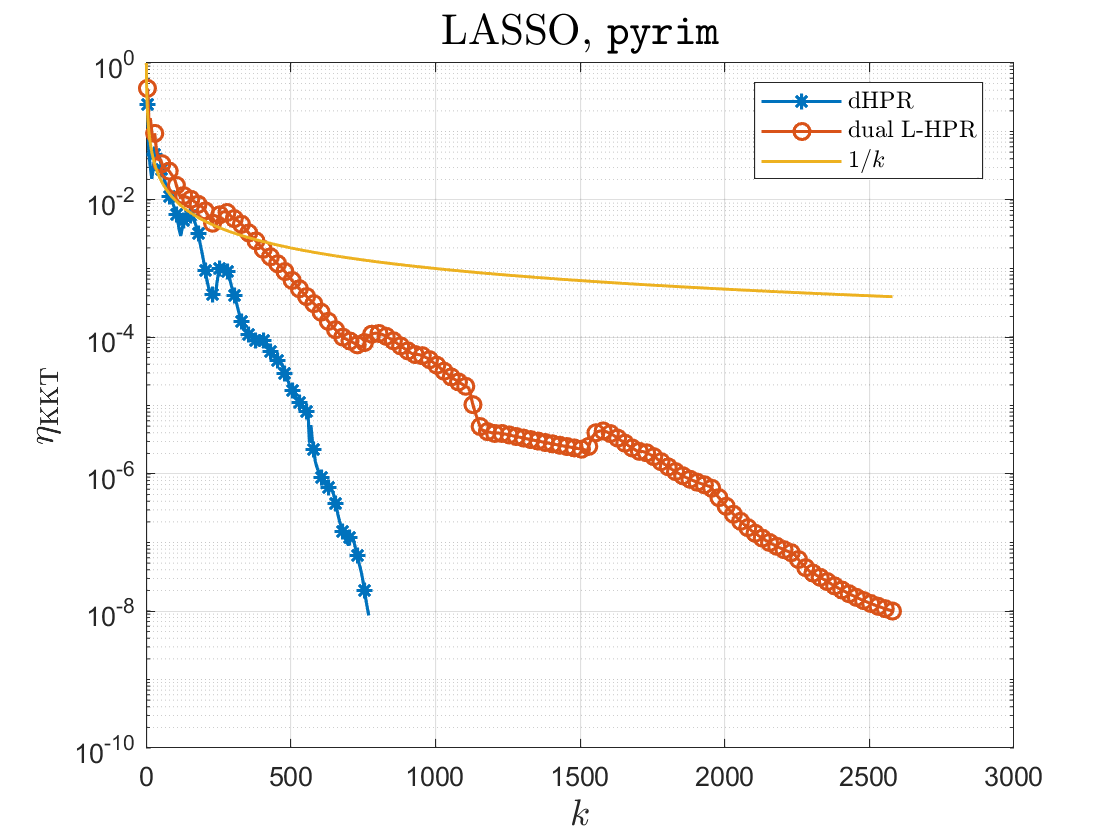}
        \caption{UCI instance}
        \label{subfig:vslhpr_uci}
    \end{subfigure}
\caption{Comparison of dHPR and dual L-HPR}
\label{fig:vslhpr}
\end{figure}

\subsection{Validation of the KKT residual bound in Theorem~\ref{thm:complexity}}\label{app:theorykkt}

According to Theorem~\ref{thm:complexity}, we define the relative KKT residual as
\begin{align}\label{eq:rekkt}
    \eta_{\text{KKT}}:=\max\left\{ 
    \dfrac{\|\prox_f(\bs z+\bs A\bs x)-\bs A\bs x\|}{1+\|\bs z\|+\|\bs A\bs x\|},\,
    \dfrac{\|\bs U\bs x\|}{1+\|\bs x\|},\dfrac{\|\prox_r(\bs v+\bs x)-\bs x\|}{1+\|\bs v\|+\|\bs x\|},\,
    \dfrac{\|\bs A^\top\bs z+\bs s+\bs v\|}{1+\|\bs z\|+\|\bs s\|+\|\bs v\|}
    \right\}.
\end{align}
Fig.~\ref{fig:theorykkt} demonstrates the convergence behavior of dHPR in terms of $\eta_{\text{KKT}}$ for distributed LASSO problems, compared to theoretical baselines on both synthetic ($m=10,\,p=50$) and real-world (UCI $\mathtt{pyrim}$) datasets. The dHPR is implemented following the setup in Section~\ref{sec:numerical} without the restart strategy and the $\sigma$ update. The plots reveal that dHPR exhibits the theoretically predicted $O(1/k)$ iteration complexity in Theorem~\ref{thm:complexity}.

\begin{figure}[htbp]
\centering
    \begin{subfigure}{0.45\linewidth}
        \includegraphics[width=\linewidth]{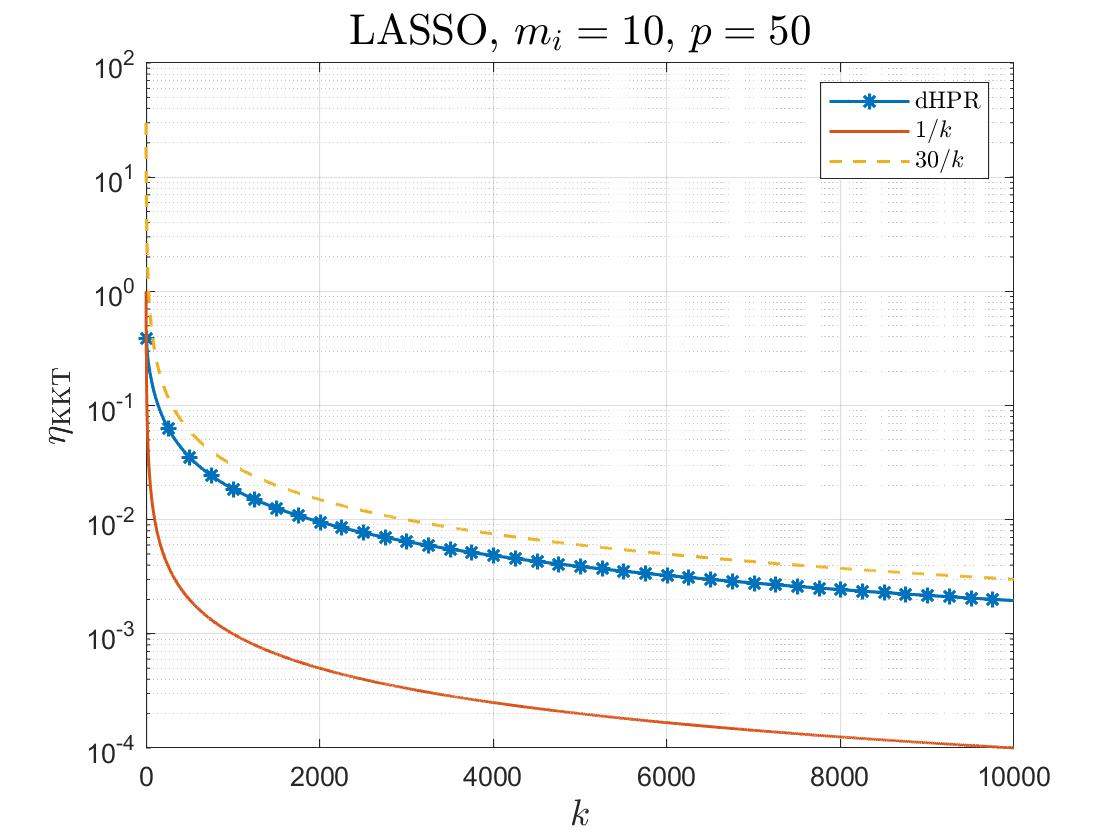}
        \caption{Synthetic data}
        \label{subfig:theorykkt_synthetic}
    \end{subfigure}
    \hfill
    \begin{subfigure}{0.45\linewidth}
        \includegraphics[width=\linewidth]{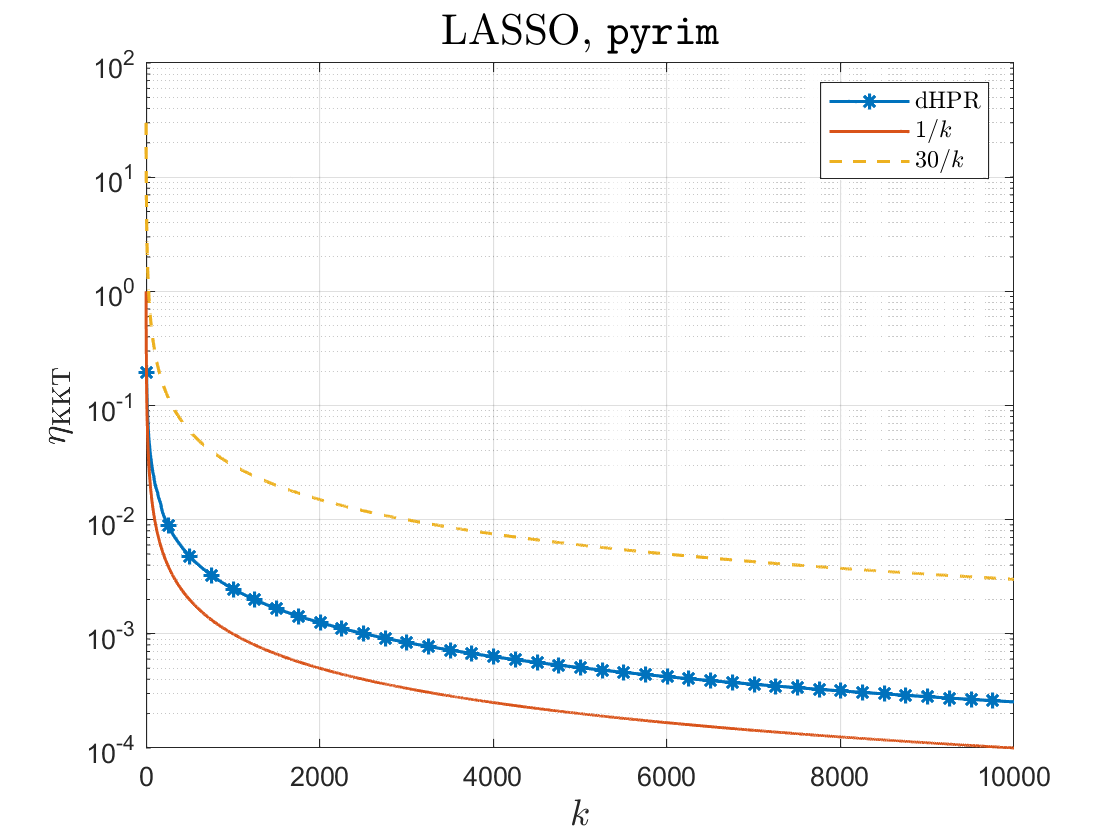}
        \caption{UCI instance}
        \label{subfig:theorykkt_uci}
    \end{subfigure}
\caption{Validation of the theoretical bounds of KKT residual in Theorem~\ref{thm:complexity}}
\label{fig:theorykkt}
\end{figure}

\bibliography{reference}{}
\bibliographystyle{ieeetr}
\end{document}